\documentclass[10pt]{article}

\AtEndDocument{\bigskip{\footnotesize
     \textsc{Marcelo Miranda, IME-USP, São Paulo, Brazil.} \par  
  \textit{E-mail}:  \texttt{marcelojosecf@gmail.com}\\

  \textsc{Brayan Ferreira, Universidade Federal do Espírito Santo, Vitória, Brazil.} \par  
  \textit{E-mail}:  \texttt{brayan.ferreira@ufes.br}\\
  
   \textsc{Alejandro Vicente, University of Stavanger, Stavanger, Norway.} \par  
  \textit{E-mail}:  \texttt{kvicente931207@gmail.com}
  }}

\title{Canonical frames in contact 3-manifolds and applications}
\author{Brayan Ferreira, Marcelo Miranda, Alejandro Vicente}
\date{}
\usepackage[a4paper, total={6in, 8in}]{geometry}
\usepackage{amssymb}
\usepackage{latexsym,hyperref}

\usepackage{enumitem}
\usepackage{amsmath}
\usepackage{amsthm}
\usepackage{amscd}
\usepackage{color}
\usepackage{graphicx}
\usepackage{tikz-cd}
\usepackage{nicefrac}
\usepackage{tikz}
\usepackage{enumitem}
\usepackage{pgfplots}
\usepackage{tikz-3dplot}
\pgfplotsset{compat=1.18}

\numberwithin{equation}{section}

\newtheorem{theorem}{Theorem}[section]

\newtheorem{proposition}[theorem]{Proposition}
\newtheorem{corollary}[theorem]{Corollary}
\newtheorem{lemma}[theorem]{Lemma}
\newtheorem{lemma-definition}[theorem]{Lemma-Definition}

\theoremstyle{definition}

\newtheorem{remark}[theorem]{Remark}

\newtheorem{example}[theorem]{Example}

\newtheoremstyle{italicclaim}
  {}{}
  {\itshape}
  {}
  {\itshape}
  {.}
  { }
  {}

\theoremstyle{italicclaim}

\newcommand{\R}{\mathbb{R}}
\newcommand{\C}{\mathbb{C}}
\newcommand{\Z}{\mathbb{Z}}
\renewcommand{\epsilon}{\varepsilon}

\begin{document}
\maketitle

\begin{abstract}
We study contact 3-manifolds $Y$ with a special global frame inspired by Cartan's structure equations. This frame is dual to a generalized Finsler structure defined by Bryant. We present some examples and rigidity results on the class of manifolds whose frame satisfies certain natural conditions on a scalar function $K\colon Y\to \mathbb{R}$, related to the frame. This function realizes the curvature when $Y$ is the unit tangent bundle with respect to a metric on a surface. As applications, we obtain sharp estimates for the action of a Reeb orbit in terms of this scalar function, under the assumption that the frame satisfies specific conditions. In particular, we recover a classical upper bound on the systole of positively curved metrics on $S^2$ due to Toponogov.

\end{abstract}

\section{Introduction}\label{sec: intro}

Let $\Sigma$ be a closed oriented connected surface. A Finsler metric $F$ on $\Sigma$ is a smooth, positive 1-homogeneous, strictly convex function on $T\Sigma\setminus \{0\}$, that extends continuously to the zero section of $T\Sigma$. We define the smooth hypersurface $S\Sigma \subset T\Sigma$ given by $S\Sigma:=F^{-1}(1)$, for which the canonical projection $\pi : S\Sigma \to \Sigma$ is a surjective submersion having the property that for each $\mathbf{q} \in \Sigma$, the fiber $\pi^{-1}(\mathbf{q}) = S\Sigma \cap T_\mathbf{q}\Sigma$ is a smooth, closed, strictly convex curve enclosing the origin $0_\mathbf{q} \in T_\mathbf{q}\Sigma$. Given such a structure, following Cartan, it is possible to define a canonical (global) coframing
$(\omega_1, \omega_2,\omega_3)$ on $S\Sigma$ that satisfies the following structural equations; see \cite[Chapter 4]{BaoChernShen2000}
\begin{equation}\label{eq: structure_eqs}
   \begin{aligned}
   d\omega_1&=-\omega_2\wedge\omega_3,\\
       d\omega_2&=-\omega_3\wedge(\omega_1-I\omega_2),\\ 
       d\omega_3&=-(K\omega_1-J\omega_3)\wedge\omega_2, 
       \end{aligned}
\end{equation} 
where $I, K$ and $J$ are smooth functions on $S\Sigma$. The scalar $K$ is the $\pi$-pullback of the Gaussian
curvature whenever $F$ defines the norm associated to a Riemannian metric, and the vanishing of $I,J$ and $K$, respectively characterize Riemannian surfaces, Landsberg surfaces and flat surfaces; see \cite{BaoChernShen2000}. In the Finsler setting, the function $I$ is called the \textit{Cartan (\textup{or} main) scalar}, the function $J$ is called the \textit{Landsberg scalar}  and the function $K$ is called the \textit{Gaussian curvature}.\\
Let $R, X_1$ and $X_2$ be the vector fields on $S\Sigma$ that are dual to the
coframing $(\omega_1, \omega_2, \omega_3)$. The form $\omega_1$ is the Hilbert contact form of $S\Sigma$
whose Reeb vector field $R$ coincides with the geodesic vector field, see \cite{hryniewicz2013introduccao} or \cite{dorner2017finsler}. As a consequence of \eqref{eq: structure_eqs} the framing $(R, X_1, X_2)$ satisfies the Lie bracket relations 
\begin{equation}\label{eq: frame_def}
    \left[ X_2,R \right]=X_1,\quad \left[X_1,X_2 \right]=R+IX_1+JX_2,\quad \left[R,X_1\right]=KX_2.
\end{equation}
We refer to this frame as a \textit{canonical frame}.

In this note we want to study more general contact manifolds, other than the unit tangent bundles of surfaces, that admit global canonical frames. In order to do so, we consider the following class of manifolds. Let $\lambda$ be a contact form in a closed 3-manifold $Y$ and $R$ its Reeb vector field. We assume that there exist $X_1$ and $X_2$ sections of $\xi=\ker\lambda$, and smooth functions $I,J,K:Y\rightarrow \mathbb{R}$ satisfying equations \eqref{eq: structure_eqs}. We call $\{R,X_1,X_2\}$ a \textit{contact canonical frame} for the contact form $\lambda$. We note that this definition is dual to that of a \emph{generalized Finsler structure}, a notion introduced by Bryant in \cite{bryant1996finsler,bryant1997projectively}, see also \cite{sabau2014generalized}. 

That such a frame always exists locally can be argued by a simple argument as follows. Indeed, for $\mathbb{R}^3$ with the standard contact structure induced by the contact form $\lambda=dz-ydx$, we have that $R=\partial_z$ and the frame given by $X_1=\partial_x-y\partial_z$ and $X_2=\partial_y-z\partial_x+yz\partial_z$ satisfies the structure equations in \eqref{eq: frame_def} for $I(x,y,z)=y$ and $J=K=0$, see Figure \ref{fig: frame}. Then, a standard argument with Darboux Theorem proves local existence.

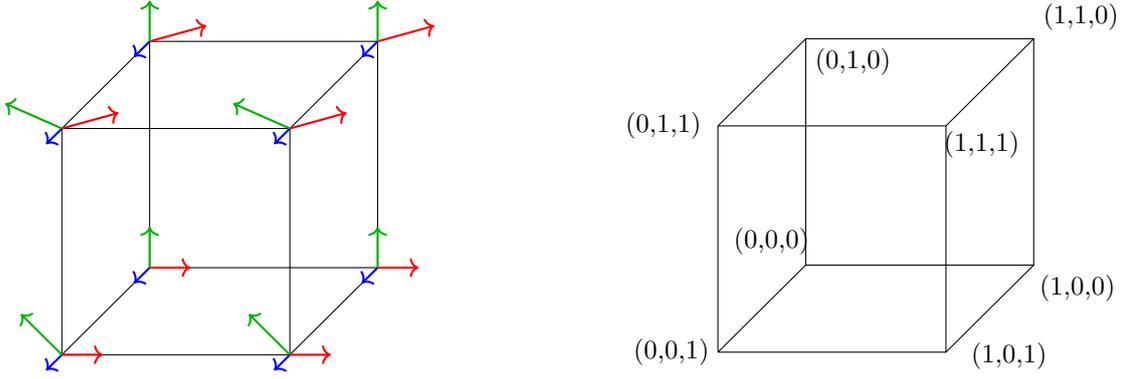
\begin{figure}[ht]
\centering

\begin{minipage}{0.48\textwidth}
\centering
\begin{tikzpicture}[scale=3]

\def\s{0.18}

\draw[black] (0,0,0) -- (1,0,0) -- (1,1,0) -- (0,1,0) -- (0,0,0);
\draw[black] (0,0,1) -- (1,0,1) -- (1,1,1) -- (0,1,1) -- (0,0,1);
\draw[black] (0,0,0) -- (0,0,1);
\draw[black] (1,0,0) -- (1,0,1);
\draw[black] (1,1,0) -- (1,1,1);
\draw[black] (0,1,0) -- (0,1,1);

\foreach \x in {0,1} {
\foreach \y in {0,1} {
\foreach \z in {0,1} {

\draw[->, blue, thick] (\x,\y,\z) -- (\x,\y,{\z+\s});
\draw[->, red, thick] (\x,\y,\z) -- ({\x+\s},{\y},{\z-\s*\y});
\draw[->, green!70!black, thick] (\x,\y,\z) -- ({\x-\s*\z},{\y+\s},{\z+\s*\y*\z});
}}}

\end{tikzpicture}
\end{minipage}
\hfill
\begin{minipage}{0.48\textwidth}
\centering
\begin{tikzpicture}[scale=3]

\draw[black] (0,0,0) -- (1,0,0) -- (1,1,0) -- (0,1,0) -- (0,0,0);
\draw[black] (0,0,1) -- (1,0,1) -- (1,1,1) -- (0,1,1) -- (0,0,1);
\draw[black] (0,0,0) -- (0,0,1);
\draw[black] (1,0,0) -- (1,0,1);
\draw[black] (1,1,0) -- (1,1,1);
\draw[black] (0,1,0) -- (0,1,1);

\node[anchor=north east] at (0,0.15,-0.15) {(0,0,0)};
\node[anchor=south east] at (0.1,0,1.25) {(0,0,1)};
\node[anchor=north west] at (0,1,0) {(0,1,0)};
\node[anchor=south west] at (-0.35,1,1.25) {(0,1,1)};
\node[anchor=north east] at (1.4,0,0) {(1,0,0)};
\node[anchor=south east] at (1.6,0,1.3) {(1,0,1)};
\node[anchor=north west] at (1,1.2,0) {(1,1,0)};
\node[anchor=south west] at (1,0.87,1.13) {(1,1,1)};

\end{tikzpicture}
\end{minipage}

\caption{
Frame $(R,X_1,X_2)$ for $\mathbb{R}^3$ in blue, red and green, respectively.
}
\label{fig: frame}
\end{figure}

Note that a contact canonical frame induces an almost complex structure $\mathbb{J}$ on the plane bundle $\xi\rightarrow Y$, defined by
\begin{equation}\label{def: almostcpx}
\mathbb{J}(aX_1+bX_2)=bX_1-aX_2.
\end{equation}
Moreover, observe that for $Z=aX_1+bX_2$, we have
$$
d\lambda(Z,\mathbb{J}Z)=d\lambda(aX_1+bX_2,bX_1-aX_2)=-a^2d\lambda(X_1,X_2)+b^2d\lambda(X_2,X_1)=(a^2+b^2)d\lambda(X_2,X_1).
$$
Since
$$
d\lambda(X_2,X_1)=X_2(\lambda(X_1))-X_1(\lambda(X_2))-\lambda(\left[X_2,X_1\right])=\lambda(\left[X_1,X_2\right])=1,
$$
we obtain
\begin{equation}\label{eq: dlambda}
d\lambda(Z,\mathbb{J}Z)=a^2+b^2>0.    
\end{equation}
Thus $\mathbb{J}$ is a compatible almost complex structure. We can then define a Riemannian metric  $\langle\cdot,\cdot\rangle$ on $Y$ by considering the inner product in the contact structure $\xi$ induced by $d\lambda$ and $\mathbb{J}$ and extending to the whole manifold by declaring the Reeb vector field $R$ to be orthonormal to the contact structure $\xi$, that is
\begin{equation}\label{eq: riem_metric}
\langle\cdot,\cdot\rangle=\lambda\otimes\lambda+d\lambda(\cdot,\mathbb{J}\cdot).
\end{equation}

\subsection{Existence and classification results}

A class of contact 3-manifolds of remarkable importance is that of the boundaries of star-shaped toric domains. A \textit{four-dimensional toric domain} is a subset of $\C^2$ defined by
\[\mathbb{X}_\Omega=\left\{(z_1,z_2)\in \C^2\mid(\pi|z_1|^2,\pi|z_2|^2)\in\Omega\right\},\] where $\Omega\subset\R_{\ge 0}^2$ is the closure of an open set in $\R^2$. If the region $\Omega$ is star-shaped with respect to the origin in $\mathbb{R}^2_{\geq 0}$, then the toric domain $\mathbb{X}_{\Omega}$ is also a star-shaped region in $\mathbb{C}^2$, and therefore the boundary $\partial \mathbb{X}_{\Omega}$ is a contact manifold, with contact form given by contracting the standard symplectic form in $\mathbb{C}^2$ with the radial Liouville vector field. In this case, we denote this contact form by $\lambda_{\mathrm{std}}$ and note that $\lambda_{\mathrm{std}} = 2\lambda_0$, where $\lambda_0$ is the standard Liouville form given by the restriction to $\partial \mathbb{X}_\Omega$ of the $1$-form
$\frac{1}{2} \sum_{j=1}^2 r_j^2 \, d\theta_j$
on $\mathbb{C}^2$, written in polar coordinates $z_j = r_j e^{i\theta_j}$. Furthermore, it is not hard to see that $\partial\mathbb{X}_{\Omega}$ is diffeomorphic to $S^3$. Notice for example, that if we take $\Omega\subset \mathbb{R}^2_{\geq 0}$ to be the triangle with vertices at $(0,0), (a,0)$ and $(0,b)$, for $a,b>0$, we have that $\mathbb{X}_{\Omega}=E(a,b)$, where $E(a,b)$ is the symplectic ellipsoid, i.e. 
\begin{equation}\label{eq:defellipsoid}
    E(a,b)=\bigg\{(z_1,z_2)\in \mathbb{C}^2\mid \pi\left(\frac{|z_1|^2}{a}+\frac{|z_2|^2}{b}\right)\leq 1\bigg\}.
\end{equation}



In the case of Finsler structures with global canonical coframes, as discussed in \eqref{eq: structure_eqs}, there is an interpretation of the scalar $K$ as Gaussian curvature in the Riemannian subcase. This outlines the importance of understanding canonical frames, and more generally contact canonical frames, with constant values of the scalar $K$, as a generalization of constant curvature surfaces. We begin by studying classes of contact 3-manifolds admitting a contact canonical frame where $K\equiv 1$, $K\equiv 0$ and $K\equiv -1$.

\begin{remark}\label{rmk: constantK}
   Let $Y$ be a closed contact $3$-manifold equipped with a contact form $\lambda$ with Reeb vector field $R$. Assume that $(Y,\lambda)$ admits a contact canonical frame $\{R,X_1,X_2\}$ such that the function $K$ is a nonzero constant, say $K=\pm k^2$ for $k> 0$. Then $\widetilde{\lambda} = k\lambda$ is a new contact form on $Y$ such that the Reeb vector field is given by $\widetilde{R} = \frac{1}{k} R$. In this case, $\{\frac{1}{k}X_1,X_2\}$ is a contact canonical frame for $\widetilde{\lambda}$ such that
    $$\begin{cases}[X_2,\widetilde{R}] = \frac{1}{k}X_1 \\ [\frac{1}{k}X_1, X_2] = \widetilde{R} + I\left(\frac{1}{k}X_1\right) + \frac{1}{k}JX_2 \\ [\widetilde{R},\frac{1}{k}X_1] = \textup{sgn}(K) X_2.
    \end{cases}$$
    where $\textup{sgn}(K)$ denotes the sign of the non-zero constant function $K$. In particular, $\{\widetilde{R},\frac{1}{k}X_1,X_2\}$ is a contact canonical frame for $\widetilde{\lambda}$ with functions $\widetilde{I},\widetilde{J},\widetilde{K}$ given by $\widetilde{I} = I$, $\widetilde{J} = \frac{1}{k}J$ and $\widetilde{K} \equiv \textup{sgn}(K)\cdot1$.
\end{remark}

We start with the case $K\equiv 0$.

\begin{theorem}\label{thm: K=0}
    Let $(Y,\lambda)$ be a closed contact 3-manifold and $\{R,X_1, X_2\}$ a contact canonical frame. If $K\equiv 0$, then
   \begin{enumerate}[label=(\roman*)]
    \item \label{it: I=J=0} $J\equiv 0$.
    \item \label{it: T^2-bundle} $Y$ is a $T^2$-bundle over $S^1$.
    \item \label{it: induced_I} The function $I$ induces a function $\tilde{I}:S^1\to \mathbb{R}$ in the base of the fibration in \ref{it: T^2-bundle} satisfying that 
    $$\int_{S^1}\tilde{I}d\theta=0.$$
\end{enumerate}
\end{theorem}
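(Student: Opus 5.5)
The plan is to extract everything from the Jacobi identity and the closedness of the dual forms, then use $\omega_3$ to build the fibration. Throughout, let $(\omega_1,\omega_2,\omega_3)$ be the coframe dual to $(R,X_1,X_2)$, so that \eqref{eq: structure_eqs} holds with $K\equiv 0$.

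\textbf{Step (i).} I would apply the Jacobi identity to $(X_2,R,X_1)$. Since $[R,X_1]=KX_2=0$ and $[X_1,[X_2,R]]=[X_1,X_1]=0$, only the middle term survives:
$$[R,[X_1,X_2]]=[R,R+IX_1+JX_2]=R(I)X_1+R(J)X_2+J[R,X_2]=(R(I)-J)X_1+R(J)X_2 .$$
This must vanish, which gives $R(J)=0$ and $R(I)=J$. Hence $J$ is constant along Reeb orbits, and along the Reeb flow $\phi_t$ we get $I(\phi_t(p))=I(p)+tJ(p)$. Since $I$ is bounded on the closed manifold $Y$, this forces $J\equiv 0$, and then also $R(I)=0$.

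\textbf{Step (ii).} With $K=J=0$ the third structure equation reads $d\omega_3=0$, so $\omega_3$ is a closed nowhere-vanishing $1$-form. Its kernel is spanned by the commuting vector fields $R$ and $X_1$. By Tischler's argument I would perturb $\omega_3$ to a nearby closed form $\omega_3'$ with rational periods. A suitable multiple of $\omega_3'$ is then $d\theta$ for a submersion $Y\to S^1$, whose fibers are closed surfaces.

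To see the fibers are tori, I would argue as follows:
\begin{enumerate}[label=(\alph*)]
\item Each fiber is orientable, because it is cooriented inside the oriented manifold $Y$.
\item $X_2$ is transverse to $\ker\omega_3$, hence also to $\ker\omega_3'$ when the perturbation is small.
\item Projecting $R$ onto $\ker\omega_3'$ along $X_2$ therefore gives a vector field on each fiber, and it is nonvanishing because $R\in\ker\omega_3$ is close to $\ker\omega_3'$.
\end{enumerate}
So each fiber has Euler characteristic $0$ and is a torus, and $Y$ is a $T^2$-bundle over $S^1$.

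\textbf{Step (iii).} For the integral identity I would compute a divergence with respect to the volume form $\omega_1\wedge\omega_2\wedge\omega_3$. Using $\mathrm{div}\,V=\sum_i\omega_i([E_i,V])$ for the frame $E=(R,X_1,X_2)$, we get
$$\mathrm{div}\,X_2=\omega_1([R,X_2])+\omega_2([X_1,X_2])=0+I=I .$$
Hence $\int_Y I\,\omega_1\wedge\omega_2\wedge\omega_3=0$.

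Define $\tilde I(\theta)$ as the integral of $I$ over the fiber above $\theta$, taken with respect to the fiber area form $\omega_1\wedge\omega_2$ restricted to the fiber (suitably normalized). Then Fubini for the fibration $\theta:Y\to S^1$ gives $\int_{S^1}\tilde I\,d\theta=0$.

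\textbf{Main obstacle.} The delicate point is to make ``induced function'' as strong as possible, namely to show that $I$ is actually constant on the fibers so that $\tilde I$ is just its descent. We already know $R(I)=0$. Checking $d(d\omega_2)=0$ yields only $dI\wedge\omega_3\wedge\omega_2=0$, i.e. again $R(I)=0$, so $X_1(I)$ is not controlled by the structure equations alone. I would first try to use the fact that the locally free $\mathbb{R}^2$-action generated by the commuting flows of $R$ and $X_1$ is transitive on each torus fiber. If an extra argument giving $X_1(I)=0$ is not available, I fall back on the fiber-average definition of $\tilde I$ above, which suffices for the stated integral identity.
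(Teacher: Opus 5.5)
\textbf{Steps (i) and (ii).} Step (i) is exactly the paper's argument. Your Tischler route to (ii) is a valid alternative to the paper's construction, which instead builds the fibration from the Frobenius foliation $\ker\omega_3=\langle R,X_1\rangle$. Two small remarks:
\begin{itemize}
\item The fibers of $\theta$ may be disconnected. You can fix this by factoring through a finite cover of $S^1$.
\item The projection of $R$ along $X_2$ is nonzero simply because $R$ and $X_2$ are linearly independent; no closeness argument is needed.
\end{itemize}

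\textbf{The gap is in (iii).} The statement asserts that $I$ is constant on the fibers, so that it \emph{descends} to a function $\tilde I$ on the base. It further asserts $\int_{S^1}\tilde I\,d\theta=0$, where $d\theta$ is induced by $\omega_3$.

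Your fallback defines $\tilde I(\theta)$ as a fiber integral of $I$. That reduces (iii) to $\int_Y I\,\omega_1\wedge\omega_2\wedge\omega_3=0$, which holds for the divergence of any vector field and says nothing about $I$ descending.

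It also cannot be upgraded inside your setup:
\begin{itemize}
\item The Tischler fibers are leaves of the perturbed form $\omega_3'$.
\item $R$ and $X_1$ lie in $\ker\omega_3$, not in $\ker\omega_3'$. So their flows in general do not even preserve these fibers, and $I$ has no reason to be constant on them.
\item Even on the correct fibration, global Fubini only gives the weighted identity $\int_{S^1}\tilde I(\theta)A(\theta)\,d\theta=0$, where $A(\theta)=\int_{F_\theta}\omega_1\wedge\omega_2$ is in general not constant.
\end{itemize}

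\textbf{What is missing.} Three ingredients.

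First, $X_1(I)$ \emph{is} controlled globally, by the same boundedness trick as in (i). From $R(I)=0$, $[R,X_1]=0$ and $[R,X_2]=-X_1$ you get $R(X_1(I))=0$ and $R(X_2(I))=-X_1(I)$. So $X_2(I)$ is affine along Reeb lines with slope $-X_1(I)$, and boundedness forces $X_1(I)\equiv0$. The paper obtains this through the vector field $X_I$ and Lemma \ref{lemma:ode}. Hence $dI=X_2(I)\,\omega_3$, and $I$ is constant on the leaves of $\ker\omega_3$.

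Second, these leaves must be the fibers.
\begin{itemize}
\item If $I$ is nonconstant, a regular level set of $I$ is a compact surface tangent to $\ker\omega_3$. So the foliation of the closed nonsingular form $\omega_3$ has a compact leaf. This forces the period group of $\omega_3$ to be cyclic, $l\Z$. Then $\omega_3$ is the pullback of $d\theta$ under a fibration $Y\to\R/l\Z$ whose fibers are exactly the leaves.
\item If $I$ is constant, your divergence identity gives $I\equiv0$, and (iii) is trivial.
\end{itemize}

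Third, apply your divergence computation fiberwise rather than globally. Since $\omega_3(X_2)=1$ and $\mathcal{L}_{X_2}\omega_3=0$, the flow of $X_2$ maps $F_\theta$ onto $F_{\theta+s}$. Moreover $\mathcal{L}_{X_2}(\omega_1\wedge\omega_2)=I\,\omega_1\wedge\omega_2$. Together these give $A'(\theta)=\tilde I(\theta)A(\theta)$ with $A>0$, hence $\int_0^l\tilde I\,d\theta=\log\bigl(A(l)/A(0)\bigr)=0$. This is a rigorous form of the paper's monodromy-determinant argument.
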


We furthermore give a complete classification if we assume in addition that $I\equiv 0$.

\begin{theorem}\label{thm: K=I=0}
     Let $(Y,\lambda)$ be a closed contact 3-manifold and $\{R,X_1, X_2\}$ a contact canonical frame. If $K\equiv I\equiv 0$, then
   \begin{enumerate}[label=(\roman*)]
    
    \item \label{it: flat_metric} the metric defined in \eqref{eq: riem_metric} for $Y$ is complete and flat .    
    \item \label{it: cryst_group} $Y$ is the quotient of $\mathbb{R}^3$ by a torsion-free crystallographic group. 
    \item \label{it: quot_of_R^3} $Y$ is finitely covered by $T^3$.
    \item \label{it: all_possibilities}
    $Y$ is isometric to exactly one of the following six types:

\begin{enumerate}
    \item $T^3$, the trivial bundle with monodromy given by $\phi = \mathrm{Id} $.
    
    \item Torus bundle with order 2 monodromy given by $\phi = -\mathrm{Id} = \begin{pmatrix}-1 & 0 \\ 0 & -1\end{pmatrix}$.
    
    \item Torus bundle with order 3 monodromy given by $\phi = \begin{pmatrix}0 & -1 \\ 1 & -1\end{pmatrix}$.
    
    \item Torus bundle with order 4 monodromy given by $\phi = \begin{pmatrix}0 & -1 \\ 1 & 0\end{pmatrix}$.
    
    \item Torus bundle with order 6 monodromy given by $\phi = \begin{pmatrix}0 & -1 \\ 1 & 1\end{pmatrix}$.
    
    \item Torus bundle with order 2 ``non-trivial twist''\footnote{This is known as the \textit{Hantzsche-Wendt flat 3-manifold} or \textit{didicosm}, see \cite{Hantzsche1935}.} given by $\phi = \begin{pmatrix}-1 & 0 \\ 1 & -1\end{pmatrix}$.
\end{enumerate}
\end{enumerate}
\end{theorem}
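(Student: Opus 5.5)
The plan is to show that under $K\equiv I\equiv 0$ the frame $\{R,X_1,X_2\}$ is orthonormal for the metric \eqref{eq: riem_metric} and spans a Lie algebra whose left-invariant metrics include flat ones. Items (i)--(iv) will then follow from the Bieberbach theory of compact flat manifolds.

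First, by Theorem \ref{thm: K=0}\ref{it: I=J=0} we also have $J\equiv 0$, so the brackets \eqref{eq: frame_def} become
\[
[X_2,R]=X_1,\qquad [X_1,X_2]=R,\qquad [R,X_1]=0,
\]
with constant structure coefficients. Next we check orthonormality. Clearly $\lambda(R)=1$ and $\lambda(X_i)=0$. From \eqref{def: almostcpx} and the computation leading to \eqref{eq: dlambda}, $\langle X_1,X_1\rangle=d\lambda(X_1,-X_2)=1$, $\langle X_2,X_2\rangle=d\lambda(X_2,X_1)=1$, and $\langle X_1,X_2\rangle=d\lambda(X_1,X_1)=0$. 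Hence $\{R,X_1,X_2\}$ is a global orthonormal frame with constant brackets.

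Then we compute the Levi-Civita connection by the Koszul formula; since all $\langle e_i,e_j\rangle$ are constant, only bracket terms survive. Note that $\mathrm{ad}_{X_2}$ acts on $\mathrm{span}(R,X_1)$ as an infinitesimal rotation ($R\mapsto -X_1$, $X_1\mapsto R$), while $R$ and $X_1$ commute. This is the Lie algebra $\mathfrak{e}(2)$ of the Euclidean group, and in Milnor's classification of unimodular three-dimensional metric Lie algebras it corresponds to structure constants $(1,1,0)$, the flat case. Concretely, I expect to find
\[
\nabla R=\nabla X_1=0 \text{ along } R,X_1,\qquad \nabla_{X_2}R=-X_1,\qquad \nabla_{X_2}X_1=R,\qquad \nabla X_2\equiv 0.
\]
Plugging these into $\mathcal{R}(U,V)W$ for frame vectors should give zero identically. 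This explicit check is the one place where a sign error could break the argument, so I would verify it carefully, or alternatively cite Milnor's curvature formulas for unimodular metric Lie algebras. Completeness is automatic because $Y$ is closed. This proves \ref{it: flat_metric}.

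For \ref{it: cryst_group}, a complete flat Riemannian manifold has universal cover isometric to $\mathbb{R}^3$ by Killing--Hopf, and $\pi_1(Y)$ acts freely and cocompactly by isometries. It is therefore a torsion-free discrete cocompact subgroup of $\mathrm{Isom}(\mathbb{R}^3)$, i.e.\ a torsion-free crystallographic (Bieberbach) group. Bieberbach's first theorem says the translation subgroup has finite index and is a lattice $\Lambda\cong\mathbb{Z}^3$, so $\mathbb{R}^3/\Lambda\cong T^3$ finitely covers $Y$, which gives \ref{it: quot_of_R^3}.

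For \ref{it: all_possibilities}, note that $Y$ is orientable since $\lambda\wedge d\lambda$ is a volume form. By the Hantzsche--Wendt classification of closed flat 3-manifolds (see Wolf's book), there are exactly six orientable ones up to affine diffeomorphism. Their holonomy groups are $1$, $\mathbb{Z}_2$, $\mathbb{Z}_3$, $\mathbb{Z}_4$, $\mathbb{Z}_6$ and $\mathbb{Z}_2\times\mathbb{Z}_2$, and the first five are the torus bundles over $S^1$ with the listed monodromies of orders $1,2,3,4,6$. The sixth, the didicosm, also fibers as the indicated torus bundle, which is consistent with Theorem \ref{thm: K=0}\ref{it: T^2-bundle}. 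These six are pairwise non-homeomorphic, since already their first homology groups differ, so the type is unique. The main effort lies in the curvature computation; the rest is quoting the Bieberbach and Hantzsche--Wendt theory.
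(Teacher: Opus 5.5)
Your route for (i)--(iii) is the paper's. You get flatness from the Koszul computation with $I=J=K=0$ (the paper reads it off its general curvature formulas), then apply Killing--Hopf/Cartan--Ambrose--Hicks and Bieberbach's first theorem. Going straight to Killing--Hopf is cleaner than the paper's detour through $\widetilde{SE}(2)/\Gamma$.

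There is one slip in the connection. Torsion-freeness gives $\nabla_{X_2}R=\nabla_RX_2+[X_2,R]=X_1$ and $\nabla_{X_2}X_1=\nabla_{X_1}X_2+[X_2,X_1]=-R$, so both of your signs are reversed. This does not affect flatness. $\nabla_R$ and $\nabla_{X_1}$ annihilate all frame fields, all connection coefficients are constant, and every bracket of frame fields lies in $\mathrm{span}(R,X_1)$. Hence $\mathcal{R}(U,V)W=0$ on the frame.

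The genuine problem is in (iv): your sentence that the didicosm ``also fibers as the indicated torus bundle'' is false. The Hantzsche--Wendt manifold has $H_1=\mathbb{Z}_4\oplus\mathbb{Z}_4$, the very invariant you use to separate the six. So $b_1=0$, whereas a fibration over $S^1$ with connected fiber yields $\pi_1(Y)\twoheadrightarrow\mathbb{Z}$. Moreover, $\begin{pmatrix}-1&0\\1&-1\end{pmatrix}^2=\begin{pmatrix}1&0\\-2&1\end{pmatrix}$. That monodromy therefore has infinite order, and its mapping torus is a Nil-manifold, not a flat one.

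In fact HW cannot occur at all, and you already have the reason in hand. You found $\nabla X_2\equiv 0$, so $X_2$ is a nonzero parallel field. Its dual $1$-form is then harmonic, which forces $b_1(Y)\ge 1$. Equivalently, the holonomy fixes the vector $X_2$, so it is a cyclic group of rotations of order $1,2,3,4$ or $6$, never $\mathbb{Z}_2\times\mathbb{Z}_2$.

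So what your argument actually proves is that $Y$ is of one of the types (a)--(e). This still implies the stated six-way disjunction, since entry (f) is simply never realized. But the identification step must be replaced by this exclusion.

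The paper's proof has the same defect. It invokes the torus-bundle conclusion of Theorem~\ref{thm: K=0}, which, applied honestly, would already exclude HW. However, the paper's proof of that conclusion works with level sets of $I$ and degenerates when $I$ is constant. The parallel-field argument is therefore the reliable way to rule HW out.
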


\begin{remark}
    We observe that the groups that give rise to the 6 manifolds in Theorem \ref{thm: K=0} are 6 out of the 10 possible Bieberbach groups in  dimension 3, the other 4 groups give rise, as a quotient in $T^3$, to non-orientable manifolds. See \cite{Hillman2002} for the complete list of this group. We would also like to observe that each of the five contact manifolds in items $(b)-(f)$ in Theorem \ref{thm: K=I=0}\ref{it: all_possibilities} carries the contact form obtained by taking the quotient of the contact manifold $T^3$, with contact form given by $\lambda=\cos\theta dx+\sin\theta dy$ (see Example \ref{ex: frame_T^3} for a detailed exposition of this case), under the appropriate deck transformation:
\begin{enumerate}[label=(\alph*), start=2]
    \item Order 2: quotient by $(\theta,y,z)\longmapsto(\theta+\pi,-y,-z)$,
    
    \item Order 3: quotient by
    $(\theta,y,z)\longmapsto\big(\theta+\tfrac{2\pi}{3},R_{2\pi/3}(y,z)\big),$  
    
    \item Order 4: quotient by $(\theta,y,z)\longmapsto\big(\theta+\tfrac{\pi}{2},R_{\pi/2}(y,z)\big)$,
    
    \item Order 6: quotient by $(\theta,y,z)\longmapsto\big(\theta+\tfrac{\pi}{3},R_{\pi/3}(y,z)\big)$,
    
    \item Order 2 (non-trivial twist): quotient by the $\mathbb{Z}_2 \times \mathbb{Z}_2$ group generated by the following two deck transformations: $(\theta, y, z) \mapsto (\theta + \pi, -y, -z)$ and $(\theta, y, z) \mapsto (-\theta, y + \pi, -z + \pi)$.
\end{enumerate}
Here $R_{\theta}$ denotes the rotation by angle $\theta\in [0,2\pi)$ in the $yz$-plane. 

\end{remark}

By Remark \ref{rmk: constantK}, we can focus on the cases where $K\equiv 1$ and $K\equiv -1$ to understand any case where $K$ is a nonzero constant. The topology of $Y$ admitting a contact canonical frame such that $K \equiv 1$ is discussed in \cite{sabau2014generalized}, namely, $Y$ must be a quotient of a Lie group $G$ by a discrete subgroup $\Gamma$, where $G$ is $S^3 \cong \mathrm{SU}(2)$, the universal covering of $\mathrm{PSL}_2(\R)$, $\widetilde{SL}_2$, or the group $\widetilde{E}_2$ of orientation preserving isometries of the plane. We give a nice classification in the case where $Y$ is a star-shaped toric domain with $K\equiv1$.

\begin{theorem}\label{thm: K=1}
    Let $\mathbb{X}_{\Omega}\subset \mathbb{R}^4$ be a smooth star-shaped toric domain such that $(\partial \mathbb{X}_{\Omega},\lambda_0)$ admits a contact canonical frame. Then $K \equiv 1$, if and only, if $X_\Omega = E(a,b)$, such that
    $$\pi(a+b) = \frac{ab}{2}.$$
\end{theorem}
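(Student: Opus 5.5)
The plan is to exploit what the structure equations force when $K\equiv 1$. By \eqref{eq: frame_def}, $[R,X_1]=X_2$ and $[R,X_2]=-X_1$. Hence, in the global frame $\{X_1,X_2\}$ of $\xi$, the linearized Reeb flow $D\phi^t|_\xi$ is exactly rotation by angle $t$. I will check this by differentiating $(\phi^t)_*X_i$ in $t$; the relation $\mathcal{L}_R X_1=X_2$, $\mathcal{L}_R X_2=-X_1$ integrates to a rotation. Consequently the Reeb flow preserves the metric \eqref{eq: riem_metric}, and every closed Reeb orbit of period $T$ has linearized return map equal to the rotation by $T$ in the trivialization of $\xi$ given by $(X_1,X_2)$.

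\textbf{Only if.} Write the boundary of $\Omega$ as a curve $\gamma$ from the $x$-axis to the $y$-axis. On the torus fibres over interior points of $\gamma$, the Reeb field is a constant combination of $\partial_{\theta_1},\partial_{\theta_2}$ determined by the normal direction of $\gamma$. Where the slope of the normal is rational, the torus is foliated by closed orbits and forms a Morse--Bott family. The linearized return map then has eigenvalue $1$ (the direction tangent to the torus), with off-diagonal shear proportional to the curvature of $\gamma$ at that point. Since the return map is a rotation, eigenvalue $1$ forces it to be the identity, so the shear, and hence the curvature of $\gamma$, vanishes at every such point. Suppose the curvature of $\gamma$ were nonzero somewhere. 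Then the normal slope would vary over an open arc, rational slopes would occur there, and we would get a contradiction. So $\gamma$ is a straight segment and, by smoothness, $\mathbb{X}_\Omega=E(a,b)$. To obtain the relation between $a$ and $b$, apply the rotation statement to the two elliptic orbits $\{z_2=0\}$ and $\{z_1=0\}$, which have $\lambda_0$-periods proportional to $a$ and $b$. In the standard trivialization their return maps are rotations by $2\pi a/b$ and $2\pi b/a$. The global frame trivialization differs from the one extending over the obvious disc by a fixed integer twist; this is computed from $c_1(\xi)=0$ on $S^3$ and the linking of the two orbits. Equating the two angles modulo $2\pi$, and then pinning down the integer using continuity of the rotation in $t$ together with the fact that the rotation angle is exactly $t$ (a real number, not merely a class mod $2\pi$), yields one linear relation for $1/a$ and $1/b$. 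After normalizing for $\lambda_0=\lambda_{\rm std}/2$, this relation should be $\pi(a+b)=ab/2$.

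\textbf{If.} I will construct an explicit frame. On $E(a,b)$ the Reeb field is $R=c_1\partial_{\theta_1}+c_2\partial_{\theta_2}$ with constants $c_1,c_2$. Starting from the left-invariant $\mathrm{SU}(2)$-type frame on the round $S^3$, I will transport it to $\partial E(a,b)$ by the radial map. I will then take $X_1,X_2$ to be a rotation of the standard unitary frame of $\xi$ by an angle that depends linearly on $\theta_1,\theta_2$ and is chosen so that $[R,X_1]=X_2$. This choice is well defined exactly when the integrality condition above holds. Finally, I will normalize so that $d\lambda_0(X_1,X_2)=-1$ and read off $I,J$ from $[X_1,X_2]$.

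\textbf{Main obstacle.} I expect the hardest step to be the bookkeeping of the integer winding between the frame trivialization and the standard trivializations along the two axis orbits, together with the factor-of-$2$ normalizations between $\lambda_0$ and $\lambda_{\rm std}$. This is what fixes the precise constant in $\pi(a+b)=ab/2$, and a slip there changes the formula.
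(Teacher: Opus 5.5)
Your argument is correct, but it takes a different route from the paper.

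\textbf{What the paper does.} It writes $X_1,X_2$ in the coordinates $(t,\theta_1,\theta_2)$ and shows that their $\partial_t$-components satisfy $\bigl(1+u^2(f'\partial_{\theta_1}-\partial_{\theta_2})^2\bigr)P=0$. A nonzero Fourier mode $(k_1,k_2)$ then gives an ODE that forces $f''=0$. Both the relation $\pi(a+b)=ab/2$ and the ``if'' direction are read off from the particular frame on $\partial E(a,b)$ obtained from the Katok metric through the double cover (Example~\ref{ex: frame_ellip} and \eqref{eq:Kellipsoid}).

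\textbf{What your route buys.} You use only the dynamical content of $K\equiv 1$: the linearized Reeb flow is rotation by $t$ in the frame. You compare this with the shear return map on rational tori, whose off-diagonal entry vanishes exactly when $f''=0$. This is cleaner and avoids the Fourier bookkeeping. Your axis-orbit step also gives something the paper leaves implicit. The Fourier argument alone only yields $|bk_1+ak_2|=ab/(2\pi)$ for some integers, and \eqref{eq:Kellipsoid} is the value of $K$ for one specific frame. Your comparison of rotation angles along $\{z_2=0\}$ shows that every frame with $K\equiv 1$ forces $a=2\pi a/b+2\pi$, i.e.\ $ab=2\pi(a+b)$.

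\textbf{The twist you left open.} It is exactly $1$. The transported field $(-\bar z_2,\bar z_1)$ restricts on $\{z_2=0\}$ to $(0,\bar z_1)$, which differs from the constant $z_2$-direction by one full turn. Every positive global frame of $\xi$ on $S^3$ is homotopic to it, since $[S^3,S^1]=0$. No factor-of-$2$ correction enters, because the $\lambda_0$-periods of the axis orbits are exactly $a$ and $b$.

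\textbf{The ``if'' direction.} The radial map pulls $\lambda_0$ back to a positive multiple of $\lambda_0$ and is $T^2$-equivariant. So the transported frame lies in $\xi$ and rotates at the constant rate $2\pi(1/a+1/b)$ along the Reeb flow. Under the relation this rate is already $1$, so no extra rotation by an angle linear in $\theta_1,\theta_2$ is needed; a nonzero one would not extend smoothly over the core circles. Dividing both vectors by the square root of their $d\lambda_0$-pairing, which is $T^2$-invariant, gives the normalization without changing the brackets. This direct construction replaces the paper's Finsler/double-cover detour, at the cost of these equivariance checks.
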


The proof of Theorem \ref{thm: K=1} uses Fourier series expansion and the smoothness of the region $\Omega \subset \mathbb{R}^2_{\geq 0}$ to ensure that the boundary of $\Omega$ is given by the graph of a decreasing linear function. The characterization then follows from an explicit computation of the canonical contact frame in the case of the unit tangent bundle endowed with the Katok Finsler metric on $S^2$ (see Example \ref{ex:framekatok}), together with the good behavior of canonical contact frames under finite covering maps (see Lemma \ref{lemma:covering}).



\begin{remark}
    It follows from \cite[Lemma 3.1]{harris2008dynamically} that $\mathcal{L}_R{\mathbb{J}}=0$ holds, if and only if, $K\equiv 1$ for any closed contact $3$-manifold $(Y,\lambda)$ assuming a contact canonical frame. In particular, the Reeb vector field $R$ is Killing, if and only if, $K\equiv 1$. So this is the unique case where $(Y,\lambda,\langle,\rangle)$ is a $K$-contact manifold.
\end{remark}

Finally, we show that if $K=-1$, then $Y$ is the total space of a circle bundle over an orbifold with negative Euler characteristic, as in the case of the unit tangent bundle over a hyperbolic surface $\Sigma_g$.

\begin{theorem}\label{thm: K=-1}
 Let $(Y,\lambda)$ be a closed contact 3-manifold and $\{R,X_1, X_2\}$ a contact canonical frame. If $K\equiv -1$, then $I \equiv J \equiv 0$ and $Y$ is the total space of a circle bundle over a $2$-dimensional orbifold $X$ with negative orbifold Euler characteristic.
 \end{theorem}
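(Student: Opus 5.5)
The plan is to first use the integrability conditions of the frame to kill $I$ and $J$, and then recognize $Y$ as a compact quotient of $\widetilde{SL}_2$.

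\emph{Step 1: the Bianchi identities.} Apply $d^2=0$ to the structure equations \eqref{eq: structure_eqs}; equivalently, impose the Jacobi identity on the brackets \eqref{eq: frame_def}.
\begin{itemize}
\item From $0=d^2\omega_1$ one gets nothing new.
\item $0=d^2\omega_2$ should give $J=R(I)$, the analogue of $J=I_{;1}$ in the Finsler setting of \cite{BaoChernShen2000}.
\item $0=d^2\omega_3$ should give a relation of the form $X_1(K)+KI+R(J)=0$, up to signs and normalizations which I will fix by direct computation.
\end{itemize}
With $K\equiv -1$ constant, these combine into
\[
R(R(I))=I
\]
along the Reeb flow. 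Let $\gamma$ be any Reeb trajectory and set $f(t)=I(\gamma(t))$. Then $f''=f$, so $f(t)=Ae^{t}+Be^{-t}$. Since $Y$ is closed, the Reeb flow is complete and $I$ is bounded, so $f$ is bounded on all of $\mathbb{R}$. This forces $A=B=0$. Hence $I\equiv 0$, and then $J=R(I)\equiv 0$. This is exactly where the sign of $K$ matters: for $K\equiv 1$ one gets $f''=-f$ and oscillating solutions, which is consistent with the Katok example. The main risk in this step is getting the signs in the second identity right, so that the ODE really is $f''=-Kf$.

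\emph{Step 2: Lie group structure.} With $I=J=0$ and $K=-1$, the brackets read
\[
[X_2,R]=X_1,\qquad [X_1,X_2]=R,\qquad [R,X_1]=-X_2.
\]
These are the structure constants of $\mathfrak{sl}_2(\mathbb{R})$. The three vector fields are complete and pointwise linearly independent on the closed manifold $Y$, so they integrate to a locally free, transitive right action of the simply connected group $G=\widetilde{SL}_2$. It follows that $Y\cong\Gamma\backslash G$ for a discrete subgroup $\Gamma$, which is cocompact because $Y$ is closed.

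\emph{Step 3: identifying the circle fibers.} Computing $\mathrm{ad}_{X_2}$ gives $R\mapsto X_1$ and $X_1\mapsto -R$. So $X_2$ is an elliptic element, and it generates the lift $\widetilde K\cong\mathbb{R}$ of $SO(2)$. Then $G/\widetilde K\cong\mathbb{H}^2$, and $Y=\Gamma\backslash G$ maps onto $\overline\Gamma\backslash\mathbb{H}^2$, where $\overline\Gamma$ is the image of $\Gamma$ in $PSL_2(\mathbb{R})$.

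The delicate point, which I expect to be the main obstacle, is to show the following two facts:
\begin{itemize}
\item $\overline\Gamma$ is a cocompact Fuchsian group;
\item $\Gamma\cap Z(G)$ has finite index in the infinite cyclic center $Z(G)\subset\widetilde K$.
\end{itemize}
For the first, I would use that the projection of a cocompact lattice under $G\to PSL_2(\mathbb{R})$ is discrete; this is the standard fact for lattices in $\widetilde{SL}_2$, and it can be deduced from compactness of $Y$ together with the fact that $Z(G)$ is cocompact in $\widetilde K$. The second then follows, because otherwise the $\widetilde K$-orbits in $Y$ would be noncompact lines, contradicting compactness of the quotient.

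Given these two facts, the flow of $X_2$ is periodic. Its orbits are the fibers of a Seifert fibration $Y\to X:=\overline\Gamma\backslash\mathbb{H}^2$, which is a closed hyperbolic $2$-orbifold. By the orbifold Gauss--Bonnet formula, $\chi^{\mathrm{orb}}(X)=-\mathrm{Area}(X)/2\pi<0$. This is the analogue of the unit tangent bundle of a hyperbolic surface $\Sigma_g$ and completes the proof.
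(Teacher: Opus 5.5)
Your Steps 1 and 2 match the paper's proof. The Jacobi identity gives $R(I)=J$ and $IK+R(J)+X_2(K)=0$; the derivative term is $X_2(K)$, not $X_1(K)$, but it vanishes for constant $K$. So $I$ satisfies $f''=f$ along Reeb lines, boundedness forces $I\equiv 0$ and then $J=R(I)\equiv 0$, and the remaining brackets are those of $\mathfrak{sl}_2(\mathbb{R})$, so $Y=\Gamma\backslash\widetilde{SL}_2$.

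Step 3 is where you genuinely depart from the paper, and your version is the correct one. The paper asserts that the $\mathbb{R}$-fibre of $\widetilde{SL}_2\to\mathbb{H}^2$ is spanned by $R$ and that the Seifert fibres of $Y$ are Reeb orbits. But $\mathrm{ad}_R$ sends $X_1\mapsto -X_2$ and $X_2\mapsto -X_1$, with eigenvalues $\pm1$, so $R$ is hyperbolic. Concretely, one can take $R=\tfrac12\,\mathrm{diag}(1,-1)$, $X_1=-\tfrac12\left(\begin{smallmatrix}0&1\\1&0\end{smallmatrix}\right)$, $X_2=\tfrac12\left(\begin{smallmatrix}0&1\\-1&0\end{smallmatrix}\right)$. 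The Reeb flow is therefore the Anosov geodesic flow of the hyperbolic orbifold, not a circle action. The fibres are the $X_2$-orbits, exactly as your computation of $\mathrm{ad}_{X_2}$ (eigenvalues $\pm i$) shows; this agrees with $T^1\Sigma_g$, where $X_2$ is the vertical field. The paper's route gains only brevity: it outsources all the lattice-theoretic input to Scott's theorem that closed $\widetilde{SL}_2$-manifolds are Seifert fibred.

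That lattice input is the one soft spot in your write-up, because the justifications you sketch do not work as stated. You do not say how cocompactness of $Z(G)$ in $\widetilde K$ yields discreteness of $\overline\Gamma$, and it does not obviously do so. Discreteness of the projection fails for general discrete subgroups, e.g.\ a cyclic subgroup of $\widetilde K$ generated by an irrational rotation, so cocompactness of $\Gamma$ must enter essentially. Likewise, a nonsingular flow on a closed manifold can have noncompact orbits, so ``otherwise the orbits would be lines'' is not by itself a contradiction.

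It is cleaner to prove your second fact first.
\begin{enumerate}
\item The centre $Z$ normalizes $\Gamma$, and the identity component of the closed subgroup $N_G(\Gamma)$ centralizes the discrete group $\Gamma$.
\item If that identity component were nontrivial, $\Gamma$ would lie in the centralizer of a one-parameter subgroup. In $\widetilde{SL}_2$ this centralizer is one-dimensional abelian ($\cong\mathbb{R}$ or $\mathbb{R}\times\mathbb{Z}$). But an abelian $\Gamma=\pi_1(Y)$ with $Y$ closed and aspherical must be $\mathbb{Z}^3$, which does not embed discretely there.
\item Hence $N_G(\Gamma)$ is discrete, $[N_G(\Gamma):\Gamma]<\infty$ by compactness of $Y$, and so $[Z:Z\cap\Gamma]<\infty$.
\item Since $Z\subset\exp(\mathbb{R}X_2)$ and $z\in Z$ acts on $Y$ by $\Gamma g\mapsto \Gamma gz=\Gamma zg$, some power of the generator acts trivially. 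Thus the $X_2$-flow is periodic.
\item Moreover, $\Gamma Z$ is a finite union of $\Gamma$-cosets, hence discrete, so $\overline\Gamma$ is a cocompact Fuchsian group.
\end{enumerate}
Your Gauss--Bonnet conclusion then follows. Alternatively, you can simply cite Borel density or Scott, as the paper does.
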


\subsection{Landsberg contact manifolds and unit tangent bundles}
We say that $(Y,\lambda)$ is a \emph{Landsberg contact manifold} if there exists a contact canonical frame $\{R,X_1,X_2\}$ in the sense of equations \eqref{eq: frame_def} such that $J \equiv 0$. Example \ref{examp: sphere} confirms that $(S^3,\lambda_{std})$ is a Landsberg contact manifold. It follows from Theorem \ref{thm: K=0} and Theorem \ref{thm: K=-1} that the conditions $K \equiv 0$ or $K\equiv -1$ imply the Landsberg condition. A straightforward consequence from Proposition \ref{eq: prop: Jacobi} below, is that $(Y,\lambda)$ is Landsberg whenever $I$ is constant along the Reeb flow. In particular, unit tangent bundles of Riemannian metrics over surfaces are examples of Landsberg contact manifolds. The next result describes a relation between the functions $K$ and $I$ in Landsberg contact manifolds.

\begin{theorem}\label{thm:landsbergctc}
    Let $(Y,\lambda)$ be a Landsberg contact manifold. Then the function $K(t) := K \circ \psi(t)$, where $\psi$ is a flowline of the vector field $X_2$, satisfies
    \begin{equation}\label{eq:curvatureode}
        K(t) = K(0)e^{-\left[\int_0^{t}I(\tau) \ d\tau\right]}.
    \end{equation}
    In particular, if $y \colon \R/T\Z \to Y$ is a periodic orbit of $X_2$ such that $K(T) = K(0) \neq 0$, then
    $$\int_0^TI\circ y(\tau) \ d\tau = 0.$$
\end{theorem}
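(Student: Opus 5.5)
The plan is to extract a first-order differential identity for $K$ along $X_2$ from the Jacobi identity applied to the frame $\{R,X_1,X_2\}$, and then integrate it along flowlines. This is presumably the content of Proposition \ref{eq: prop: Jacobi}; since I do not see its statement, I will rederive what I need.

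\textbf{Step 1: the Jacobi identity.} Write
$$[R,[X_1,X_2]]+[X_1,[X_2,R]]+[X_2,[R,X_1]]=0$$
and substitute the bracket relations \eqref{eq: frame_def}. The three terms become:
\begin{itemize}
\item $[R,R+IX_1+JX_2]=R(I)X_1+IKX_2+R(J)X_2+J[R,X_2]$, and $[R,X_2]=-X_1$, so this equals $(R(I)-J)X_1+(IK+R(J))X_2$;
\item $[X_1,X_1]=0$;
\item $[X_2,KX_2]=X_2(K)X_2$.
\end{itemize}

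\textbf{Step 2: compare coefficients.} Since $\{R,X_1,X_2\}$ is a global frame, each coefficient of the sum vanishes. This gives
$$R(I)=J \qquad\text{and}\qquad X_2(K)=-IK-R(J).$$
Under the Landsberg hypothesis $J\equiv 0$, the second relation reduces to
$$X_2(K)=-IK.$$
As a side remark, the first relation is exactly the statement that $I$ constant along the Reeb flow implies Landsberg.

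\textbf{Step 3: integrate along a flowline.} Let $\psi$ be a flowline of $X_2$ and set $K(t)=K\circ\psi(t)$ and $I(t)=I\circ\psi(t)$. Then
$$K'(t)=X_2(K)(\psi(t))=-I(t)K(t).$$
This is a linear scalar ODE. By uniqueness of solutions its solution is $K(t)=K(0)e^{-\int_0^t I(\tau)\,d\tau}$, which is \eqref{eq:curvatureode}. This also covers the case $K(0)=0$, where $K$ vanishes identically along the flowline.

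\textbf{Step 4: the periodic case.} For a periodic orbit $y$ of $X_2$ of period $T$ with $K(T)=K(0)\neq 0$, divide the formula at $t=T$ by $K(0)$. This gives $e^{-\int_0^T I\circ y}=1$, and since the exponent is real, $\int_0^T I\circ y(\tau)\,d\tau=0$.

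The only delicate point is the bookkeeping of signs in the Jacobi computation, in particular the term $J[R,X_2]=-JX_1$. That term does not affect the $X_2$-coefficient, which is the only one needed. The rest is routine.
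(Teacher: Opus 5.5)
Your proposal is correct and matches the paper's argument: the paper also uses the Jacobi identity (Proposition~\ref{eq: prop: Jacobi}, whose $X_2$-coefficient $IK+R(J)+X_2(K)=0$ you rederived correctly) to obtain $X_2(K)=-IK$ when $J\equiv 0$. It then integrates the linear ODE $\dot{K}+IK=0$ along the $X_2$-flowline, exactly as you do. Your extra remarks on the case $K(0)=0$ and on the periodic case are also fine.
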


Note that equation \eqref{eq:curvatureode} implies that $K$ is constant along a flow line of $X_2$ where $I$ vanishes. In particular, when $Y$ is the unit tangent bundle of a Riemannian metric, we have $I \equiv 0$ and then $K$ is constant along any flow line of $X_2$. This is an expected behavior since $X_2$ generates the tangent fibers over the base in the sense of the following remark.

\begin{remark}
    Let $(Y,\lambda)$ be a contact manifold with a contact canonical frame $\{R,X_1,X_2\}$. Assume that $X_2$ is a periodic vector field such that its flow defines a free and smooth $S^1$ action on $Y$. The orbit space $\Sigma = Y/S^1$ is an orientable smooth closed surface since we have $T\Sigma \cong \xi$ and $d\lambda|_\xi$ is nondegenerate. Moreover, the natural projection $\pi\colon Y \to \Sigma$ is a smooth submersion. In this case, one can define a smooth map $\nu \colon Y \to T\Sigma$ setting $\nu(y) = d\pi_y(R(y))$ which turns out to be an immersion to each fiber $\pi^{-1}(x)$ as a closed and strictly convex curve in $T_x\Sigma$ enclosing $0 \in T_x\Sigma$. In particular, when $\nu$ is injective, $Y$ can be interpreted exactly as the unit tangent bundle for a Finsler metric $F$ on $\Sigma$ such that $F^{-1}(1) = \bigcup_{x \in \Sigma} \nu(\pi^{-1}(x))$. The converse also holds, i.e., if $Y = S\Sigma$ is the unit tangent bundle over a compact surface then $X_2$ is periodic, the orbit space $Y/S^1$ is diffeomorphic to $\Sigma$ and the projection $Y \to Y/S^1$ is a smooth submersion; see the discussion in \cite[Section 1.2]{bryant1997projectively}.
\end{remark}

\subsection{Applications}\label{sec: applications1}

As a first application, we use the Sturm comparison theorem for second-order ODEs to obtain a lower bound on the action of (closed) Reeb orbits under additional hypotheses on the contact frame. Given a contact manifold $(Y,\lambda)$ and a Reeb trajectory $\gamma \colon [0,T] \to Y$, the \emph{action of $\gamma$} is the number $\mathcal{A}(\gamma) = \int_\gamma \lambda = T$.

When the function $K \colon Y \to \R$ is constant along the flow lines of $X_2$, the function $I$ satisfies a second-order ODE of the same type as those appearing in the Sturm comparison theorem when restricted to Reeb flow lines; see Proposition~\ref{eq: prop: Jacobi} and Theorem \ref{thm: sturm} below. We obtain the following result.

\begin{theorem}\label{thm:sturmcompar}
   Let $(Y,\lambda)$ be a closed contact $3$-manifold admitting a contact canonical frame. Suppose that $K$ is constant along the flow lines of the vector field $X_2$. Consider $I(t) = I \circ \varphi(t)$, $t \in \R$, where $\varphi$ is a Reeb flow line such that $I(t)$ is not identically zero. Assume also that $K(t) > 0$ for all $t$. Then, the zero set of $I(t)$ is an infinite and discrete subset of $\R$, and if $t_1$ and $t_2$ are two consecutive zeros, we have
\[
    |t_1 - t_2| < \frac{2\pi}{\sqrt{\inf K \circ \varphi}}.
\]
Moreover, let $t_1 < t_2 < \cdots < t_n$ be real numbers such that $I(t_1) = I(t_2) = \cdots = I(t_n)$.
Then,
\[
    t_n - t_1 > \frac{(n-2)\pi}{\sqrt{\sup K \circ \varphi}}.
\]
In particular, if $\gamma \colon \R/T\Z \to Y$ is a Reeb orbit on $(Y,\lambda)$, then
\begin{equation}\label{eq:sturmbound}
    T = \mathcal{A}(\gamma) \ge \frac{\pi}{\sqrt{\max K \circ \gamma}}.
\end{equation}
\end{theorem}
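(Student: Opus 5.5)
The plan is to reduce everything to the scalar second-order ODE satisfied by $I$ along Reeb flow lines, and then apply Sturm-type comparison arguments (Theorem \ref{thm: sturm}) with constant-coefficient model equations. First I would invoke Proposition~\ref{eq: prop: Jacobi}. Equivalently, one can derive it from the Jacobi identity applied to the brackets \eqref{eq: frame_def}, which gives $J=R(I)$ together with a relation of the form $R(J)+KI+X_2(K)=0$. Under the hypothesis that $K$ is constant along $X_2$-flow lines, $X_2(K)=0$, so $I(t)=I\circ\varphi(t)$ satisfies
\[
I''(t)+K(t)\,I(t)=0,\qquad K(t)=K\circ\varphi(t)>0 .
\]
Since $I$ is not identically zero, uniqueness for this linear ODE shows that $I$ and $I'$ never vanish simultaneously. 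Hence the zeros are simple and therefore discrete.

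For the first estimate I compare with $u''+m\,u=0$, where $m=\inf K\circ\varphi>0$. Its solutions $\sin(\sqrt m (t-a))$ vanish at points spaced $\pi/\sqrt m$ apart. By Sturm comparison, since $K\ge m$, every open interval of length $\pi/\sqrt m$ (more precisely, between consecutive zeros of a suitably translated model solution) contains a zero of $I$. This shows the zero set is infinite in both directions, and that consecutive zeros satisfy $|t_1-t_2|\le \pi/\sqrt m<2\pi/\sqrt m$.

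For the second estimate I would use a scaled Prüfer angle. Put $M=\sup K\circ\varphi$ and write $I=r\sin\theta$, $I'=\sqrt M\, r\cos\theta$ with $r>0$. A direct computation gives
\[
\theta'=\sqrt M\cos^2\theta+\frac{K}{\sqrt M}\sin^2\theta\in\Bigl(0,\sqrt M\,\Bigr].
\]
So $\theta$ is strictly increasing, with speed at most $\sqrt M$. Given $t_1<\dots<t_n$ with equal values of $I$, Rolle's theorem yields $s_1<\dots<s_{n-1}$, with $s_i\in(t_i,t_{i+1})$ and $I'(s_i)=0$, i.e.\ $\theta(s_i)\in \pi/2+\pi\Z$. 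Because $\theta$ is monotone, successive $s_i$ have angles differing by at least $\pi$, so $s_{i+1}-s_i\ge \pi/\sqrt M$. Summing gives $t_n-t_1>s_{n-1}-s_1\ge (n-2)\pi/\sqrt M$, and the inequality is strict because $t_1<s_1$.

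For \eqref{eq:sturmbound}, let $\gamma$ have period $T$ and suppose first that $I\circ\gamma\not\equiv0$. Then $I$ is $T$-periodic, so $t_k=kT$ for $k=1,\dots,n$ are points of equal value. The previous estimate gives $(n-1)T>(n-2)\pi/\sqrt{\max K\circ\gamma}$, and letting $n\to\infty$ yields $T\ge \pi/\sqrt{\max K\circ\gamma}$.

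The main obstacle is the degenerate case $I\circ\gamma\equiv0$, where the ODE for $I$ carries no information. Here I would instead use the linearized Reeb flow. Write $d\varphi_t(v)=a X_1+bX_2$ and set $\mathcal L_R v=0$. Using $[R,X_1]=KX_2$ and $[R,X_2]=-X_1$, this gives $a'=b$ and $b'=-Ka$, so $a''+Ka=0$ with the same coefficient. I would then run the Prüfer argument on this Jacobi-type equation. The point to check is that the linearized return map along a closed orbit produces a solution whose angle has a genuine periodicity, namely an eigen- or rotation-number argument for the $\mathrm{SL}_2$ monodromy. This is the step where I am least sure the argument closes as stated.
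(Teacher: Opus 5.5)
Your argument proves the theorem as stated, and your unresolved last paragraph is not needed (see below). Its skeleton is the paper's: reduce to $\ddot I+KI=0$ along Reeb flow lines, compare with constant-coefficient models, and apply the equal-value estimate to $t,t+T,t+2T,\dots$ on a closed orbit. The differences are in the comparison tools, and they favour you.

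For the spacing of zeros, the paper compares with the single solution $\sin(\sqrt m\,t)$, which only gives $|t_1-t_2|<2\pi/\sqrt m$. Your translated models give $|t_1-t_2|\le\pi/\sqrt m$. In the equality case $K\equiv m$, Sturm only produces a zero in the \emph{closed} interval, so your ``every open interval'' should read ``every closed interval''.

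For the equal-value estimate, the paper compares $I$ with $\sin(\sqrt M\,t)$ after asserting that $I$ has two zeros in every $(t_j,t_{j+1})$. That assertion is false in general: with $K\equiv 1$, $I(t)=\cos t$ and $t_{1,2}=\mp 0.1$, there is no zero in between. Your Pr\"ufer angle, combined with Rolle's theorem producing zeros of $\dot I$, proves the inequality without that claim.

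It also gives more. If $I\circ\gamma\not\equiv 0$ on a closed orbit, then $(I,\dot I)=(I,J)\circ\gamma$ is $T$-periodic. So $\theta(T)-\theta(0)$ is a positive multiple of $2\pi$, and hence $T\ge 2\pi/\sqrt{\max K\circ\gamma}$. Like the paper, you tacitly take $\inf K\circ\varphi>0$. This is automatic on closed orbits, but it does not follow from $K(t)>0$ on a non-closed flow line.

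Drop your last paragraph. The final clause of the theorem is to be read under the standing hypothesis that $I$ does not vanish identically along the orbit. This is exactly how the paper derives it, and it is your non-degenerate case.

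When $I\circ\gamma\equiv 0$ the bound is actually false, so no argument through the linearized flow can close. Take the unit tangent bundle of the prolate spheroid $(x^2+y^2)/\epsilon^2+z^2/c^2=1$ with its Riemannian canonical frame. Then $I\equiv J\equiv 0$, and $K$ is the pulled-back Gaussian curvature, constant along the fibres generated by $X_2$. The equator lifts to a Reeb orbit with $T=2\pi\epsilon$ and $K\circ\gamma\equiv 1/c^2>0$. The bound would then require $2\pi\epsilon\ge \pi c$, which fails once $c>2\epsilon$. Along this orbit the linearized return map is conjugate to a rotation by $2\pi\epsilon/c$, which imposes no lower bound on $T$.

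The Katok orbits of action $2\pi/(1+|a|)$, which the paper cites for sharpness, lie where $\cos\psi=0$. There $I=0$, so they are of this degenerate type too. For non-degenerate orbits with $K\equiv 1$, your argument forces $T\ge 2\pi$.
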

Examples of contact $3$-manifolds satisfying the hypotheses of the theorem include the unit tangent bundles of spheres endowed with Katok Finsler metrics; see Example~\ref{ex:framekatok} below. In this setting, the minimal action among closed Reeb orbits is given by $2\pi/(1+\lvert a\rvert)$. In particular, this shows that the upper bound~\eqref{eq:sturmbound} is sharp: indeed, as $\lvert a\rvert \to 1$, the minimal action converges to $\pi$, while the curvature remains constant, $K \equiv 1$.

Following ideas on \cite{harris2008dynamically}, we also show that under the presence of a contact canonical frame, one can estimate the action of a Reeb orbit in terms of its Conley-Zehnder index and how the function $K$ is bounded along the orbit. In the following, we denote by $CZ(\gamma)$ the Conley--Zehnder index of a (closed) Reeb orbit $\gamma$ with respect to the trivialization of the contact structure $\xi$ given by a contact canonical frame $\{R,X_1,X_2\}$; see section \ref{sec: CZ_index}. Here, $\lfloor x \rfloor$ denotes the floor of the real number $x$, i.e., the greatest integer less than or equal to $x$.

\begin{theorem}\label{thm:estimate}
    Let $(Y,\lambda)$ be a closed contact $3$-manifold admitting a contact canonical frame. If $\gamma \colon \R/T\Z \to Y$ is a Reeb orbit on $(Y,\lambda)$ such that $K \circ \gamma \geq 1$ then
    $$T = \mathcal{A}(\gamma) \leq 2\pi \left\lfloor \frac{CZ(\gamma)+1}{2} \right\rfloor.$$
In particular, if $K\circ \gamma > 0$, then $CZ(\gamma) > 0$. On the other hand, if $K \circ \gamma \leq 1$, then
    $$T = \mathcal{A}(\gamma) \geq 2\pi \left\lfloor \frac{CZ(\gamma)}{2} \right\rfloor.$$
\end{theorem}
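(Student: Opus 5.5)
The plan is to linearize the Reeb flow along $\gamma$ in the trivialization $\{X_1,X_2\}$ of $\xi$. Along $\gamma$ this reduces to a Jacobi-type ODE. We then compare the angular speed of its solutions with that of the harmonic oscillator $a''+a=0$ and read off the Conley--Zehnder index from the rotation interval.

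\textbf{Step 1: the linearized equation.} Let $Z(t)=d\phi_t(Z_0)$ for $Z_0\in\xi_{\gamma(0)}$, where $\phi_t$ is the Reeb flow. Since $\phi_t$ preserves $\lambda$, $Z(t)\in\xi$, so we may write $Z=aX_1+bX_2$. Invariance of $Z$ under the flow gives $\mathcal{L}_RZ=[R,Z]=0$. Using \eqref{eq: frame_def}, in particular $[R,X_1]=KX_2$ and $[R,X_2]=-X_1$, this becomes
$$[R,Z]=(a'-b)X_1+(b'+Ka)X_2=0 .$$
Hence $a'=b$ and $b'=-Ka$, that is, $a''+K(\gamma(t))\,a=0$. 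In particular the linearized return map is the time-$T$ map of this planar linear system.

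\textbf{Step 2: angular speed.} Write $(a,b)=r(\cos\theta,-\sin\theta)$, with the sign chosen so that positive rotation agrees with the orientation used to define $CZ$ (the orientation determined by $\mathbb{J}$ in \eqref{def: almostcpx}; I would double-check this sign convention). A direct computation gives
$$\theta'=K\cos^2\theta+\sin^2\theta .$$
Consequently:
\begin{itemize}
\item if $K\circ\gamma\ge 1$, then $\theta'\ge 1$, so every nonzero solution turns through an angle at least $T$ in time $T$;
\item if $K\circ\gamma\le 1$, then $\theta'\le 1$, so every solution turns through at most $T$;
\item if $K>0$, then $\theta'>0$ for all solutions.
\end{itemize}
Equivalently, this is a Sturm comparison with the constant-coefficient equation $a''+a=0$, using Theorem~\ref{thm: sturm}.

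\textbf{Step 3: the index via rotation numbers.} Recall from Section~\ref{sec: CZ_index} the characterization of $CZ$ for a path of symplectic $2\times2$ matrices through the rotation numbers $\Delta(v)=(\theta_v(T)-\theta_v(0))/2\pi$ of nonzero vectors $v$:
\begin{itemize}
\item $CZ=2k+1$ exactly when all $\Delta(v)$ lie in the open interval $(k,k+1)$ (elliptic case);
\item $CZ=2k$ when some vector, namely an eigenvector of the return map, has $\Delta(v)=k$ (hyperbolic case).
\end{itemize}

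Assume first $K\circ\gamma\ge1$, so that $\Delta(v)\ge T/2\pi$ for every $v$.
\begin{itemize}
\item If $CZ=2k+1$, then $T/2\pi\le\Delta(v)<k+1=\lfloor (CZ+1)/2\rfloor$.
\item If $CZ=2k$, the eigenvector gives $T/2\pi\le k=\lfloor (CZ+1)/2\rfloor$.
\end{itemize}
This proves the first inequality.

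If only $K\circ\gamma>0$, all $\Delta(v)>0$. In the elliptic case the interval $(k,k+1)$ then meets $(0,\infty)$, so $k\ge0$ and $CZ\ge1$. In the hyperbolic case $k=\Delta(v)>0$ for the eigenvector, so $k\ge1$ and $CZ\ge2$. Either way $CZ(\gamma)>0$.

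Now assume $K\circ\gamma\le1$, so that $\Delta(v)\le T/2\pi$ for every $v$.
\begin{itemize}
\item If $CZ=2k+1$, then $T/2\pi\ge\Delta(v)>k=\lfloor CZ/2\rfloor$.
\item If $CZ=2k$, the eigenvector gives $T/2\pi\ge k$.
\end{itemize}
This proves the second inequality.

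\textbf{Main obstacle.} I expect the delicate point to be conventions rather than analysis. First, I must fix the orientation sign in Step 2 consistently with the paper's definition of $CZ$ in the frame trivialization. Second, I must treat degenerate orbits, where the return map has eigenvalue $1$, using whatever extension of $CZ$ Section~\ref{sec: CZ_index} adopts, for instance the lower semicontinuous extension. For that I would check that the rotation-interval characterization above still holds with the same closed/open endpoint behavior, or perturb $K$ slightly in $C^0$ along $\gamma$ and pass to the limit using monotonicity of the rotation numbers in $K$.
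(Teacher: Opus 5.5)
Your argument is correct in substance, but it takes a different route from the paper.

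\textbf{The paper's route.} The paper never uses the rotation interval of the linearized flow. It works directly with its definition $CZ(\gamma)=\max\{k \mid \tau_k<0\}$ via the asymptotic operator $L_A$ with $A=\mathrm{diag}(1,K)$. It shows that a normalized eigenfunction with eigenvalue $\tau$ turns with speed $2\pi\dot\theta=(1+\tau)w_1^2+(K+\tau)w_2^2$. It then evaluates this at $k=CZ(\gamma)+1$ (where $\tau_k\ge 0$) if $K\circ\gamma\ge 1$, and at $k=CZ(\gamma)$ (where $\tau_k<0$) if $K\circ\gamma\le 1$.

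\textbf{Relation to your argument.} Your Step 2 is precisely the case $\tau=0$ of that computation. Your sign choice is right: the paper's trivialization $\psi$ sends your $Z=aX_1+bX_2$ to $(b,a)=r(-\sin\theta,\cos\theta)$, so increasing $\theta$ is the $J_0$-positive direction. Your route buys a transparent Sturm-type picture. It costs two inputs that the paper's route gets for free.

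First, the rotation-interval description of $CZ$ is not what Section~\ref{sec: CZ_index} contains. You must cite its equivalence with the Hofer--Wysocki--Zehnder eigenvalue definition used there.

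Second, degenerate orbits genuinely need treatment, because they contain the equality cases. For example, if $K\equiv 1$ and $T=2\pi k$, the return map is $\mathrm{Id}$; for $k=1$ this is the round-sphere case of Corollary~\ref{thm:toponogov}. The paper's $\max\{k\mid\tau_k<0\}$ is the lower semicontinuous extension; in that example it gives $CZ=2k-1$, and the first bound is an equality.

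\textbf{Closing the degenerate case.} Any perturbation must decrease $A$, not increase it. Replacing $A$ by $A-\epsilon I$ shifts every $\tau_k$ to $\tau_k+\epsilon$. For small $\epsilon>0$ this keeps $\max\{k\mid\tau_k<0\}$ and removes the zero eigenvalue. Your angular speed becomes $(K-\epsilon)\cos^2\theta+(1-\epsilon)\sin^2\theta$, which is $\ge 1-\epsilon$ when $K\circ\gamma\ge1$ and $\le 1$ when $K\circ\gamma\le 1$. The nondegenerate argument applies to the perturbed path, and letting $\epsilon\to 0$ finishes.

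Alternatively, one checks directly that for degenerate orbits $CZ=2k$ forces $k$ to be the left endpoint of the rotation interval, and $CZ=2k+1$ forces the interval into $(k,k+1]$. Since your inequalities are non-strict, Step 3 then goes through unchanged; the bound $\Delta(v)<k+1$ simply becomes $\Delta(v)\le k+1$.
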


Note that the first part of Theorem \ref{thm:estimate} ensures that the inequality
\begin{equation}\label{eq: ineq_CZ}
T = \mathcal{A}(\gamma) \leq \frac{2\pi}{\sqrt{\min K\circ \gamma}}
    \left\lfloor \frac{CZ(\gamma)+1}{2} \right\rfloor,
\end{equation}
holds whenever $\gamma \colon \R/T\Z \to Y$ is a Reeb orbit on $(Y,\lambda)$ such that $K \circ \gamma > 0$. Indeed, under this assumption, one can always scale the contact form by $\sqrt{\min K \circ \gamma}$, cf. Remark \ref{rmk: constantK}. An immediate consequence of Theorem \ref{thm:estimate} concerns the type of an extremal Reeb orbit depending on the parity of its Conley--Zehnder index.
Recall that a Reeb orbit $\gamma \colon \R/T\Z \to Y$ is called \emph{elliptic} or \emph{hyperbolic} if the eigenvalues of the linearized Poincaré map 
\[ P_\gamma \colon \xi_{\gamma(0)} \to \xi_{\gamma(0)}, \]
lie on the unit circle or are non-unit real numbers, respectively. 
It follows that if $\gamma^k$ denotes the $k$-th iterate of a hyperbolic Reeb orbit $\gamma$, then 
\[ CZ(\gamma^k) = k\,CZ(\gamma). \]

\begin{corollary}
Let $(Y,\lambda)$ be a closed contact $3$-manifold admitting a contact canonical frame. 
Suppose that $\gamma \colon \R/T\Z \to Y$ is a Reeb orbit on $(Y,\lambda)$ such that $K \circ \gamma > 0$ and that the equality
\[
T = \frac{2\pi}{\sqrt{\min K\circ \gamma}}
    \left\lfloor \frac{CZ(\gamma)+1}{2} \right\rfloor,
\]
holds. Then $\gamma$ is an elliptic Reeb orbit if $CZ(\gamma)$ is odd, and (positive) hyperbolic otherwise. 

\begin{proof}
It is simple to check that $CZ(\gamma)$ is even if and only if $\gamma$ is positive hyperbolic, that is, the eigenvalues of $P_\gamma$ are positive real numbers $\lambda, \lambda^{-1}$ with $\lambda \neq 1$. 
Suppose that $CZ(\gamma)$ is an odd integer. If $\gamma$ were (negative) hyperbolic, we would have $CZ(\gamma^2) = 2\,CZ(\gamma)$ and
\[
\frac{2\cdot2\pi}{\sqrt{\min K\circ \gamma}} 
\left\lfloor \frac{CZ(\gamma)+1}{2} \right\rfloor 
= 2T = \mathcal{A}(\gamma^2) 
\leq 
\frac{2\pi}{\sqrt{\min K\circ \gamma}}
\left\lfloor \frac{CZ(\gamma^2)+1}{2} \right\rfloor,
\]
where the upper bound follows from \eqref{eq: ineq_CZ}. 
In particular, we obtain
\[
2\left(\frac{CZ(\gamma)+1}{2}\right)
= 2\left\lfloor \frac{CZ(\gamma)+1}{2} \right\rfloor
\leq
\left\lfloor \frac{2CZ(\gamma)+1}{2} \right\rfloor
= CZ(\gamma),
\]
which is a contradiction since $CZ(\gamma)$ is odd.
\end{proof}
\end{corollary}

We note that to obtain the estimates on the action of a Reeb orbit in Theorem \ref{thm:sturmcompar} or \ref{thm:estimate}, we only need the existence of the contact canonical frame in a neighborhood of the Reeb orbit.

As a second consequence of Theorem \ref{thm:estimate}, we recover a famous estimate for a systole in the case of positively curved metrics in the sphere $S^2$ due to Toponogov, see \cite{toponogov1959evaluation}. Given a Riemannian metric $g$ on $S^2$, we denote the minimum of its Gaussian curvature by $K_{min}$ and the length of the shortest closed geodesic by $L_{min}$.

\begin{corollary}\label{thm:toponogov}
    Let $g$ be a positively curved Riemannian metric on $S^2$. Then
    $$L_{min} \leq \frac{2\pi}{\sqrt{K_{min}}}.$$
    \begin{proof}
        The unit tangent bundle $S_gS^2 = \{(p,v) \in TS^2\mid g_p(v,v) = 1\}$ admits the \emph{Hilbert contact form} defined by
        $$(\lambda_g)_{(p,v)}(w) = g_{p}(v, d\pi_{(p,v)}\cdot w),$$
        where $(p,v) \in S_gS^2$, $w \in T_{(p,v)}S_gS^2$ and $\pi \colon TS^2 \to S^2$ is the canonical projection. The Reeb vector field $R$ for $\lambda_g$ coincides with the geodesic vector field defined on $S_gS^2$. In particular, the Reeb trajectories are of the form $\gamma(t) = (c(t),\dot{c}(t))$, where $c \colon I \to S^2$ is a geodesic parametrized by the arc length. Moreover, the action $\mathcal{A}(\gamma) = \int_\gamma \lambda_g$ of the Reeb trajectory $\gamma$ coincides with the length $L_g(c) = \int_I \sqrt{g_{c(t)}(\dot{c}(t),\dot{c}(t))} \ dt$ of the geodesic $c$. These facts are well known and can be found for instance in \cite{hryniewicz2013introduccao}. Further, the Conley--Zehnder index of $\gamma$ with respect to a global trivialization of $\xi$ agrees with the Morse index of the geodesic $c$, see \cite{liu2005relation}.
        
        Now it follows from the Cartan's structure equations that $(S_gS^2,\lambda_g)$ admits a contact canonical frame where $K$ agrees with the pullback of the Gaussian curvature of $g$ via $\pi$, see e.g. \cite[Section 3.1]{harris2008dynamically}. By hypothesis, we have $K>0$ and scaling the metric $g$ defining $\widetilde{g} = K_{min} g$, we obtain that the new curvature $\widetilde{K}$ satisfies $\widetilde{K} \geq 1$ everywhere. In particular, given a closed geodesic $c \colon I \to S^2$ for $\widetilde{g}$, Theorem $\ref{thm:estimate}$ yields
        \begin{equation}\label{eq:estimategeod}
            L_{\widetilde{g}}(c) = \mathcal{A}((c,\dot{c})) \leq 2\pi \left\lfloor \frac{CZ((c,\dot{c}))+1}{2} \right\rfloor.
        \end{equation}
        We recall that the shortest closed geodesic for positively curved metrics is simple and has Morse index equal to $1$, see \cite{calabi1992simple} and \cite[Theorem 4.2]{ballmann1983some}. Therefore, \eqref{eq:estimategeod} yields $L_{\widetilde{g}}(c) \leq 2\pi$ for a shortest closed geodesic $c$. Since $\widetilde{g} = K_{min}g$, we obtain the desired inequality.
        \end{proof}
\end{corollary}

\noindent{\bf Structure of the paper:} We begin Section \ref{sec: properties} providing several examples of contact canonical frames on closed contact $3$-manifolds, after that we prove some properties that the functions $\{I,J,K\}$ on the contact canonical frame must satisfy, as well as compute the full curvature tensor for the contact metric structure in terms of these three functions. In Section \ref{sec: classification_proofs} we give the proofs of the classification results in Theorems \ref{thm: K=0}, \ref{thm: K=I=0}, \ref{thm: K=1} and \ref{thm: K=-1}. In Section \ref{sec: applications} we use Sturm comparison theorem to prove Theorem \ref{thm:sturmcompar}, define the Conley--Zehnder index in the present context and prove Theorem \ref{thm:estimate} on the estimate of the action of a Reeb orbit in terms of its index and the function $K$.
\newline

\noindent\textbf{Acknowledgments:}
The authors would like to thank Lucas Ambrozio for interesting comments on the results of this manuscript. M. M. is supported by the São Paulo Research Foundation (FAPESP) under grant No. 2025/21147-7. The third author is supported by the ISF Grant No. 2445/20.

\section{Examples and properties of contact canonical frames}\label{sec: properties}

\subsection{Examples}\label{sec: examples}

In this section we give explicit contact canonical frames in some classical contact 3-manifolds: $S^3$ with the standard contact form, $\mathbb{T}^3$ with the contact form induced by the flat metric in $\mathbb{T}^2$, the unit tangent bundle of the Katok Finsler metric on $S^2$ and the boundary of the symplectic ellipsoid $E(a,b)$.

\begin{example}\label{examp: sphere}
    In $(S^3,\lambda_{std})$, let $\mathfrak{n}$ denote the unit normal vector field in $S^3\subset\mathbb{R}^4$. Here $\lambda_{std}(x) = \langle \mathbf{i}x, \cdot\rangle$ denotes the standard contact form on $S^3 \subset \C^2$. Straightforward computations show that the Reeb vector field $R_0$ is given by $R_0=\mathbf{i}\mathfrak{n}$ and the contact structure is given by $\xi:=\ker\lambda_{std}=\langle\mathbf{j}\mathfrak{n},\mathbf{k}\mathfrak{n}\rangle$, where $\mathbf{i},\mathbf{j},\mathbf{k}$ denotes the Hamilton quaternions. Furthermore, it can be shown that taking $X_1=\mathbf{j}\mathfrak{n}$ and $X_2=\frac{1}{2}\mathbf{k}\mathfrak{n}$, the relations in \eqref{eq: frame_def} are satisfied for $I$ and $J$ constant functions equal to zero and $K$ constant equal to 4. 
\end{example}

\begin{example}\label{ex: frame_T^3}
    In $\mathbb{T}^3$, take coordinates $(\theta,x,y)$ and consider the contact form 
    $$\lambda=\cos\theta dx+\sin\theta dy$$
The Reeb vector field $R$ of $\lambda$ is given by
$$R=\cos\theta\partial_x+\sin\theta\partial_y.$$
Define the frame for $\ker \lambda$ given by
$$X_1=-\sin\theta\partial_x+\cos\theta\partial_y, \quad X_2=\partial_{\theta}.$$
Notice that 
\begin{equation*}
    \left[ X_2,R \right]=X_1,\quad \left[X_1,X_2\right]=R,\quad \left[R,X_1\right]=0.
\end{equation*}
So $\{R,X_1,X_2\}$ forms a contact canonical frame for $(\mathbb{T}^3,\lambda)$, such that $I=J=K=0$ in the notation of \eqref{eq: frame_def}. Furthermore, we can put a flat Riemannian metric on $\mathbb{T}^3$ given by 
    $$g=d\theta^2+dx^2+dy^2,$$
satisfying that $\lambda=g(R,\cdot)$. In addition, notice that, for the metric $\hat{g}=dx^2+dy^2$ on $\mathbb{T}^2$, we have that $(\mathbb{T}^3,\lambda)=(S_{\hat{g}}\mathbb{T}^2,\lambda_g)$, for $\lambda_g$ the Hilbert contact form in $S_{\hat{g}}\mathbb{T}^2$ and the metric $g$ is the only lift of the metric $\hat{g}$ to $S_{\hat{g}}\mathbb{T}^2$ making $R$ orthonormal to $\xi$.
\end{example}

Now we describe the contact canonical frame on the well-known Katok Finsler metrics on $S^2$.

\begin{example}\label{ex:framekatok}
Consider the Hamiltonian
\begin{align*}
    H_a \colon T^*S^2 &\to \R \\ \nu &\mapsto H_a(\nu) = H_0(\nu) + a\langle \nu, \partial_\theta\rangle,
\end{align*}
where $a$ is a real constant, $H_0(\nu) = \Vert \nu \Vert_{g_0^*}$ is the dual norm with respect to the round metric $g_0$ and $\partial_\theta$ is the infinitesimal generator of rotations around the $z$-axis on $S^2 \subset \R^3$. When $\vert a \vert <1$, the composition $F_a = H_a \circ \mathcal{L}_{\frac{1}{2}H_a^2}$ is a Finsler metric, where $\mathcal{L}_{\frac{1}{2}H_a^2} \colon T^*S^2 \to TS^2$ denotes the Legendre transformation associated to $\frac{1}{2}H_a^2$. This is often called the Katok Finsler metric on the sphere $S^2$; see e.g. \cite{ziller1983geometry}. In particular, the level sets of $H_a$ are fiberwise convex and hence the tautological one form $\lambda = pdq$ restricts to a contact form on each of them. Consider $Y = H_a^{-1}(1)$ and denote again by $\lambda$ the latter mentioned contact form. In this level set, the Reeb vector field $R$ coincides with the Hamiltonian vector field $X_{H_a}$ defined by the equation
\begin{equation}\label{eq:Hamvfkatok}
    \omega(X_{H_a},\cdot) = dH_a,
\end{equation}
where $\omega = d\lambda = dp \wedge dq$ is the canonical symplectic form on $T^*S^2$. Given the geodesic polar coordinates $(r,\theta) \in S^2 \subset \R^3$ with the relations
$$x = \sin r \cos \theta, \quad y = \sin r \sin \theta, \quad z=\cos r,$$
let $(r,\theta,p_r,p_\theta) \in T^*S^2$ be the induced cotangent coordinates. In these coordinates, we have
$$H_a(r,\theta,p_r,p_\theta) = \sqrt{p_r^2 + \frac{p_\theta^2}{\sin^2 r}} + a p_\theta,$$
$\lambda = p_r dr + p_\theta d\theta$ and $\omega = dp_r \wedge dr + dp_\theta \wedge d\theta$. In particular, one computes the Hamiltonian vector field using the equation \eqref{eq:Hamvfkatok}:
$$R = \frac{p_r}{1-ap_\theta} \partial_r + \left(\frac{p_\theta}{(1-ap_\theta)\sin^2 r}+a\right)\partial_\theta + \left(\frac{p_\theta^2\cos r}{(1-ap_\theta)\sin^3r}\right) \partial_{p_r}.$$
We introduce the fiber angle $\psi$ such that
$$p_r = H_0\cos \psi, \quad p_\theta = H_0 \sin r \sin \psi,$$
which reduces to
$$p_r = \frac{1}{1+a\sin r \sin \psi}\cos \psi, \quad p_\theta = \frac{1}{1+a\sin r \sin \psi}\sin r \sin \psi,$$
when restricting to the level set $Y$. In coordinates $(r,\theta,\psi) \in Y$, we have
$$R = \cos \psi \ \partial_r + \left(\frac{\sin \psi}{\sin r} + a \right) \partial_\theta - \cot r \sin \psi \ \partial_\psi.$$
We note that $R = S + a\partial_\theta$, where $S$ is the (co)geodesic spray with respect to the round metric. 
From now on, we denote $\rho = 1+a\sin r \sin \psi$. Define $X_2 = \sqrt{\rho} \ \partial_\psi$. Using $S(\rho) = 0$ and $\partial_\theta(\rho) = 0$, we compute
\begin{align*}
    [X_2, R] &= [\sqrt{\rho} \ \partial_\psi, S + a\partial_\theta] \\ &= - (S+a\partial_\theta)(\sqrt{\rho})\partial_\psi - \sqrt{\rho}[S+a\partial_\theta,\partial_\psi] \\ &= -\sqrt{\rho}[S, \partial_\psi] - a\sqrt{\rho}[\partial_\theta, \partial_\psi] \\ &= \sqrt{\rho} [\partial_\psi,S] \\ &= \sqrt{\rho}H,
\end{align*}
where $H = (-\sin \psi \ \partial_r + \frac{\cos \psi}{\sin r}\ \partial_\theta - \cot r \cos \psi \ \partial_\psi)$ is the horizontal vector field with respect to the round metric. In particular, defining $X_1 = \sqrt{\rho} H$, we have $[X_2,R] = X_1$. Moreover, we have
\begin{align*}
    [R,X_1] & = [S+a\partial_\theta,\sqrt{\rho}H] \\ &= S(\sqrt{\rho})H + \sqrt{\rho}[S,H]+a\partial_\theta(\sqrt{\rho})H + a\sqrt{\rho}[\partial_\theta,H] \\ &=\sqrt{\rho}[S,H] = \sqrt{\rho} \ \partial_\psi \\ &= X_2.
\end{align*}
Finally, we compute
\begin{align*}
    [X_1,X_2] &= [\sqrt{\rho}H,\sqrt{\rho} \ \partial_\psi ] \\ &= \sqrt{\rho}H(\sqrt{\rho}) \ \partial_\psi + \sqrt{\rho}\left( - \partial_\psi(\sqrt{\rho})H + \sqrt{\rho} [H,\partial_\psi]\right) \\ &= \frac{1}{2}\left(H(\rho) \ \partial_\psi - \partial_\psi(\rho) H \right) + \rho S \\ &= \frac{1}{2}\left(-a \cos r \ \partial_\psi - a\sin r \cos \psi H \right) + \rho(R - a\partial_\theta),
\end{align*}
where we use $H(\rho) = -a\cos r$ and $R = S + a \partial_\theta$. Further, it is straightforward to check that
\begin{equation}\label{eq:partialtheta}
    \partial_\theta = \frac{\sin r}{\rho} \left( \sin \psi R + \frac{\cos \psi}{\sqrt{\rho}} X_1 + \frac{\cot r}{\sqrt{\rho}}X_2 \right).
    \end{equation}
    Using \eqref{eq:partialtheta} in the computation $[X_1,X_2]$, we obtain
    \begin{align*}
        [X_1,X_2] &= \frac{1}{2}\left(-a \cos r \ \partial_\psi - a\sin r \cos \psi H \right) + \rho(R - a\partial_\theta) \\ &= \frac{1}{2}\left(-a \cos r \ \partial_\psi - a\sin r \cos \psi H \right) + \rho R - a\sin r\left(\sin \psi R + \frac{\cos \psi}{\sqrt{\rho}} X_1 + \frac{\cot r}{\sqrt{\rho}}X_2\right) \\ &= R -\frac{3}{2}\frac{a \sin r \cos \psi}{\sqrt{\rho}} X_1 - \frac{3}{2}\frac{a \cos r}{\sqrt{\rho}} X_2,
    \end{align*}
    where we used again that $X_1 = \sqrt{\rho}H$ and $X_2 = \sqrt{\rho} \ \partial_\psi$. Therefore, we conclude that $Y = H_a^{-1}(1)$ admits a canonical contact frame $\{R,X_1,X_2\}$ where
    \begin{equation}\label{eq:framekatok}
        I = -\frac{3}{2}\frac{a \sin r \cos \psi}{\sqrt{1+a \sin r \sin \psi}}, \quad J= - \frac{3}{2}\frac{a \cos r}{\sqrt{1+a \sin r \sin \psi}}, \quad \text{and} \quad K = 1.
        \end{equation}
\end{example}

In the next lemma we study how contact frames behave under finite covering maps.

\begin{lemma}\label{lemma:covering}
    Let $(\widehat{Y},\hat{\lambda})$ and $(Y,\lambda)$ be three-dimensional closed contact manifolds. Suppose that $\Phi\colon \widehat{Y} \to Y$ is a smooth finite covering map such that $\Phi^*\lambda =c\hat{\lambda},$ where $c$ is a positive constant. Then, if $(Y,\lambda)$ assumes a contact canonical frame $\{R,X_1,X_2\}$ with the functions $I,J,K \colon Y \to \R$, then $\{\widehat{R},c\widetilde{X_1},\widetilde{X_2}\}$ is a contact canonical frame for $(\widehat{Y},\hat{\lambda})$ where $\widetilde{X}_1,\widetilde{X}_2$ are the unique lifts of $X_1,X_2$ via $\Phi$, respectively, and $\widehat{R}$ is the Reeb vector field on $(\widehat{Y},\hat{\lambda})$. In this case, the functions $\widehat{I}, \widehat{J}, \widehat{K} \colon \widehat{Y} \to \R$ are given by 
    \begin{equation*}
    \widehat{I} = I \circ \Phi, \quad \widehat{J} = c\cdot (J\circ \Phi) \quad \text{and} \quad \widehat{K} = c^2\cdot(K\circ \Phi).
\end{equation*}

\begin{proof}
    Let $\widetilde{R}$ be the unique lift of $R$ via $\Phi$. Since we have
    \begin{align*}
    c\hat{\lambda}(\widetilde{R}) &= \Phi^*\lambda(\widetilde{R}) = \lambda(R\circ \Phi) = 1, \ \text{and} \\
     c\cdot d\hat{\lambda}(\widetilde{R},\cdot) &= \Phi^*d\lambda(\widetilde{R},\cdot) = d\lambda(R\circ \Phi,d\Phi\ \cdot) = 0,
    \end{align*}
    the relation $\widetilde{R} = (1/c) \widehat{R}$ holds. The Lemma follows from this relation, the uniqueness of the lift of a vector field on $Y$ to a vector field on $\widehat{Y}$ via $\Phi$ and the relation
    $$d\Phi([V_1,V_2]) = [d\Phi(V_1), d\Phi(V_2)],$$
    for any vector fields $V_1,V_2$ on $\widehat{Y}$. In fact, we have the following
    \begin{align*}
        \left[\widetilde{X}_2,\widehat{R} \right] &= \left[\widetilde{X}_2,c\widetilde{R} \right] = c\widetilde{X}_1, \\  \left[c\widetilde{X}_1,\widetilde{X}_2 \right] &= c\widetilde{R} + (I \circ \Phi) c\widetilde{X}_1 + c\cdot (J\circ \Phi) \widetilde{X}_2, \\ \left[\widehat{R},c\widetilde{X}_1\right] &= \left[c\widetilde{R},c\widetilde{X}_1\right] =c^2 \cdot (K\circ \Phi)\widetilde{X}_2.
        \end{align*}
\end{proof}
\end{lemma}

We now use Example \ref{ex:framekatok} and Lemma \ref{lemma:covering} to prove the existence of contact canonical frames in symplectic ellipsoids, see the definition in \eqref{eq:defellipsoid}.

\begin{example}\label{ex: frame_ellip}
    Given any Finsler metric $F$ on $S^2$, there exists a double covering $\Phi \colon S^3 \to S_FS^2$ such that\footnote{We recall that $\lambda_{std} = 2\lambda_0$ on $S^3$.} $\Phi^*\lambda_F = 4h \lambda_0$, where $\lambda_F$ is the Hilbert contact form on $S_FS^2$ with respect to $F$, $\lambda_0$ is the restriction to $S^3$ of the standard Liouville form on $\C^2$ and $h \colon S^3 \to \R_{>0}$ is a positive smooth function; see \cite[Section 4]{harris2008dynamically}. When $F$ is the Katok Finsler metric $F_a = H_a \circ \mathcal{L}_{\frac{1}{2}H_a^2}$ discussed in Example \ref{ex:framekatok}, $h$ is the contact form corresponding to the ellipsoid
    \[\partial E\left(\frac{\pi}{1+a},\frac{\pi}{1-a}\right) = \left\{(z_1,z_2) \in \C^2 \mid \pi\left(\frac{1+a}{\pi}\vert z_1 \vert^2 + \frac{1-a}{\pi}\vert z_2 \vert^2\right) = 1\right\},\]
    i.e. $h$ is such that there exists a diffeomorphism $g \colon S^3 \to \partial E\left(\frac{\pi}{1+a},\frac{\pi}{1-a}\right)$ such that $g^*\lambda_0 = h\lambda_0$. In this case, we have the double covering
    $$\Phi \circ g^{-1} \colon \partial E\left(\frac{\pi}{1+a},\frac{\pi}{1-a}\right) \to S_{F_a}S^2,$$
    such that $(\Phi \circ g^{-1})^*\lambda_{F_a} = 4\lambda_0$. Therefore, by Example \ref{ex:framekatok} and Lemma \ref{lemma:covering}, we conclude that the complex ellipsoid $\partial E\left(\frac{\pi}{1+a},\frac{\pi}{1-a}\right)$ equipped with the standard Liouville form admits a contact canonical frame such that
    $$I = I_a \circ \Phi \circ g^{-1}, \quad J = 4\cdot (J_a \circ \Phi \circ g^{-1}) \quad \text{and} \quad K = 16, $$
    where $I_a$ and $J_a$ are the corresponding functions in the Katok example given by \eqref{eq:framekatok}. We note that when $a=0$, this recovers Example \ref{examp: sphere}.
        Furthermore, given a complex ellipsoid $E(a,b)$ as in \eqref{eq:defellipsoid}, it is simple to check that
   $$E(a,b) = \sqrt{\frac{2ab}{\pi(a+b)}} \cdot E\left(\frac{\pi}{1+c},\frac{\pi}{1-c}\right),$$
    where $c = (b-a)/(a+b)$. In particular, given the behavior of the canonical contact frame under scaling by constant, cf. Remark \ref{rmk: constantK}, and the computation for $\partial E\left(\frac{\pi}{1+a},\frac{\pi}{1-a}\right)$ above, we conclude that $\partial E(a,b)$ admits a contact canonical frame for the contact form $\lambda_0$ such that
    \begin{equation}\label{eq:Kellipsoid}
    K = 16 \cdot \left(\frac{\pi(a+b)}{2ab}\right)^2.
    \end{equation}
\end{example}

\begin{remark}
    Even though we are only considering contact canonical frames on closed 3-manifolds, it follows from Example \ref{ex: frame_ellip} that the boundary of the symplectic cylinder $B^2(r)\times \mathbb{R}^2$ also admits a contact canonical frame with $K=16\left(\frac{\pi}{r}\right)^2$. Furthermore, it also follows from Example \ref{ex: frame_ellip} that the boundary of the polydisk 
    $$P(a,b):=\{(z_1,z_2)\in \mathbb{C}^2| \|z_1\|^2\leq a,\|z_2\|^2\leq b\},$$
    admits a canonical frame defined in the complement of the set where $\|z_1\|^2= a,\|z_2\|^2= b$.
\end{remark}


\subsection{Properties}

We begin this section by studying the relations that the Jacobi identity forces upon the functions $\{I,J,K\}$, thanks to the structure equations in \eqref{eq: frame_def}. This is the content of the next proposition.

\begin{proposition}\label{eq: prop: Jacobi}
    Let $\lambda$ be a contact form in $Y$ and $R$ its Reeb vector field. Let $\{R,X_1,X_2\}$ be a \textit{contact canonical frame} for $\lambda$. Then 
    \begin{itemize}
        \item $R(I)=J$,
        \item $IK+R(J)+X_2(K)=0$.
    \end{itemize}

\end{proposition}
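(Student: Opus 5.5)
The plan is to apply the Jacobi identity to the three frame fields and then read off coefficients in the frame $\{R,X_1,X_2\}$. This frame is a global frame of $TY$: $R$ is transverse to $\xi$, and $X_1,X_2$ span $\xi$ because $d\lambda(X_2,X_1)=1$. So any vanishing combination $aR+bX_1+cX_2=0$ forces $a=b=c=0$ pointwise. Concretely, I will evaluate
\[
[R,[X_1,X_2]]+[X_1,[X_2,R]]+[X_2,[R,X_1]]=0
\]
using only the bracket relations \eqref{eq: frame_def} and the Leibniz rule $[V,fW]=V(f)W+f[V,W]$.

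The terms are computed one at a time. First, $[X_1,[X_2,R]]=[X_1,X_1]=0$. Second, $[X_2,[R,X_1]]=[X_2,KX_2]=X_2(K)X_2$. Third, expanding by Leibniz,
\[
[R,[X_1,X_2]]=[R,R+IX_1+JX_2]=R(I)X_1+I[R,X_1]+R(J)X_2+J[R,X_2].
\]
Here $[R,X_1]=KX_2$, and the relation $[X_2,R]=X_1$ gives $[R,X_2]=-X_1$. Hence this term equals
\[
(R(I)-J)X_1+(IK+R(J))X_2.
\]

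Adding the three terms, the Jacobi identity becomes
\[
(R(I)-J)X_1+\bigl(IK+R(J)+X_2(K)\bigr)X_2=0 .
\]
By the pointwise linear independence of $X_1$ and $X_2$, both coefficients vanish. This gives $R(I)=J$ and $IK+R(J)+X_2(K)=0$.

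I do not expect a real obstacle. The only delicate point is the sign bookkeeping: one must remember $[R,X_2]=-X_1$ and keep the derivative terms coming from Leibniz. The $R$-component of the identity vanishes automatically, so it yields no extra condition.
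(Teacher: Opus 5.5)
Your proof is correct and is the same as the paper's: you expand the Jacobi identity for $R,X_1,X_2$ using the bracket relations \eqref{eq: frame_def} and obtain $(R(I)-J)X_1+\bigl(IK+R(J)+X_2(K)\bigr)X_2=0$. You then conclude from the linear independence of $X_1,X_2$. Your term-by-term Leibniz computation, including the sign $[R,X_2]=-X_1$, supplies the details the paper leaves implicit.
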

\begin{proof}
    Using the Jacobi identity
    $$[R,[X_1,X_2]]+[X_1,[X_2,R]]+[X_2,[R,X_1]]=0,$$
    together with \eqref{eq: frame_def} we have that
    \begin{equation}\label{eq: Jacobi}
       \left(R(I)-J\right)X_1+\left(IK+R(J)+X_2(K)\right)X_2=0, 
    \end{equation}
and since $X_1$ and $X_2$ must be linearly independent, the claims follow.
\end{proof}

As a first consequence, we prove Theorem \ref{thm:landsbergctc}.

\begin{proof}[Proof of Theorem \ref{thm:landsbergctc}]
    Since $J \equiv 0$, Proposition \ref{eq: prop: Jacobi} yields
    $$IK + X_2(K) = 0.$$
    Therefore, if $\psi(t)$ is a flow line of $X_2$, we have
    $$\dot{K}(t) + I(t)K(t) = 0,$$
    where the dependence in parameter $t$ is understood as the composition with $\psi$. Solving this ODE leads to equation \eqref{eq:curvatureode}.
\end{proof}

The relations in Proposition \ref{eq: prop: Jacobi} will be directly used in the proofs of Theorems \ref{thm: K=0} and \ref{thm: K=-1}, as well as in the following computations of the curvature tensor for the Levi-Civita connection of the Riemannian metric defined in \eqref{eq: riem_metric}. This result in turn, will be useful in the proof of Theorem \ref{thm: K=I=0}.

\begin{proposition}\label{prop: curvature_tensor}
    The Riemannian curvature tensor on a contact 3-manifold $Y$ admitting a contact canonical frame and whose metric is defined as in \eqref{eq: riem_metric}, is given by the equations
    \begin{equation}\label{eq:curvature_components_onecolumn}
\begin{aligned}
\mathcal{R}(R,X_1)R &= \big(\tfrac{3}{4}K^2-K\big)X_1 - \tfrac{1}{2}R(K)\,X_2,\\
\mathcal{R}(R,X_1)X_1 &= \big(-\tfrac{3}{4}K^2+K\big)R + \big(-J-\tfrac{1}{2}X_1(K)+KJ\big)X_2,\\
\mathcal{R}(R,X_1)X_2 &= \tfrac{1}{2}R(K)\,R + \big(J+\tfrac{1}{2}X_1(K)-KJ\big)X_1,\\[4pt]
\mathcal{R}(R,X_2)R &= -\tfrac{1}{2}R(K)\,X_1 - \tfrac{1}{4}K^2 X_2,\\
\mathcal{R}(R,X_2)X_1 &= \tfrac{1}{2}R(K)\,R + \big(I(K-1)+\tfrac{1}{2}X_2(K)\big)X_2,\\
\mathcal{R}(R,X_2)X_2 &= \tfrac{1}{4}K^2 R + \big(-I(K-1)-\tfrac{1}{2}X_2(K)\big)X_1,\\[4pt]
\mathcal{R}(X_1,X_2)R &= \big(-\tfrac{1}{2}X_1(K)-J+KJ\big)X_1 + \big(-I+IK+\tfrac{1}{2}X_2(K)\big)X_2,\\
\mathcal{R}(X_1,X_2)X_1 &= \big(\tfrac{1}{2}X_1(K)-JK+J\big)R + \big(-\tfrac{1}{4}K^2 + X_2(I)-X_1(J)+I^2+J^2\big)X_2,\\
\mathcal{R}(X_1,X_2)X_2 &= \big(X_1(J)-X_2(I)+\tfrac{1}{4}K^2 - I^2 - J^2\big)X_1 + \big(-I(K-1)-\tfrac{1}{2}X_2(K)\big)R.
\end{aligned}
\end{equation}
In particular, if $I=J=K\equiv 0$, we get that the curvature tensor is zero and $Y$ is flat.
\end{proposition}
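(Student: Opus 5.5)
The plan is to compute the Levi-Civita connection of the metric \eqref{eq: riem_metric} in the frame $\{R,X_1,X_2\}$ via the Koszul formula, and then compute $\mathcal{R}(U,V)W=\nabla_U\nabla_VW-\nabla_V\nabla_UW-\nabla_{[U,V]}W$ on frame elements. The first step is to confirm that the frame is orthonormal. By definition $\langle R,R\rangle=1$ and $R\perp\xi$. On $\xi$ we have $\langle Z,W\rangle=d\lambda(Z,\mathbb{J}W)$. Using $\mathbb{J}X_1=-X_2$, $\mathbb{J}X_2=X_1$ and $d\lambda(X_1,X_2)=-1$, we get $\langle X_1,X_1\rangle=\langle X_2,X_2\rangle=1$ and $\langle X_1,X_2\rangle=0$. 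Hence the Koszul formula reduces to
\[
2\langle\nabla_{E_a}E_b,E_c\rangle=\langle[E_a,E_b],E_c\rangle-\langle[E_b,E_c],E_a\rangle+\langle[E_c,E_a],E_b\rangle ,
\]
where only the structure constants from \eqref{eq: frame_def} enter:
\[
[X_2,R]=X_1,\qquad [X_1,X_2]=R+IX_1+JX_2,\qquad [R,X_1]=KX_2 .
\]

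The second step is to tabulate all nine covariant derivatives $\nabla_{E_a}E_b$. For example:
\begin{itemize}
\item $\nabla_RR=0$, since $R$ is the Reeb field of a contact metric structure and has geodesic flow lines.
\item $\nabla_RX_1=\tfrac{K}{2}X_2$ and $\nabla_{X_1}R=-(1-\tfrac{K}{2})X_2$, up to signs that the bookkeeping will fix.
\item $\nabla_{X_1}X_1=-IX_2$, $\nabla_{X_2}X_2=-JX_1$, and so on.
\end{itemize}
The functions $I$ and $J$ appear only in the $\xi$-components of $\nabla_{X_i}X_j$. The terms $K/2$ and $1/2$ appear in the mixed components involving $R$.

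The third step is to substitute these into the curvature formula for the three pairs $(R,X_1)$, $(R,X_2)$ and $(X_1,X_2)$. Derivatives of the coefficients produce the terms $R(K)$, $X_1(K)$, $X_2(K)$, $X_2(I)$ and $X_1(J)$. I would use Proposition \ref{eq: prop: Jacobi} to simplify:
\begin{itemize}
\item $R(I)=J$ eliminates $R(I)$.
\item $R(J)=-IK-X_2(K)$ eliminates $R(J)$.
\end{itemize}
This is how $I(K-1)+\tfrac12X_2(K)$ emerges, for instance in $\mathcal{R}(R,X_2)X_1$. As consistency checks, I would use the antisymmetry $\langle\mathcal{R}(U,V)W,Z\rangle=-\langle\mathcal{R}(U,V)Z,W\rangle$ and the pair symmetry. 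These show that $\mathcal{R}(R,X_1)X_2$ is determined by $\mathcal{R}(R,X_1)R$ and $\mathcal{R}(R,X_1)X_1$, which reduces the work to roughly six independent components. The sectional curvature $\langle\mathcal{R}(X_1,X_2)X_2,X_1\rangle$ should contain $X_1(J)-X_2(I)-I^2-J^2$, the Gauss-curvature-type term of the $\xi$-part, plus the contribution $\tfrac14 K^2$.

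The final claim is immediate from the table. Setting $I=J=K=0$ kills every coefficient, because each entry is a polynomial in $I,J,K$ and their derivatives with no constant term. For instance, $\tfrac34K^2-K$ and $I(K-1)$ both vanish. Therefore $\mathcal{R}\equiv0$.

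The main obstacle is sign and bookkeeping accuracy in the second-order terms, namely $\nabla_{[U,V]}W$ combined with the derivatives of the coefficients $K/2$. I would cross-check the result on Example \ref{examp: sphere}, which has $I=J=0$ and $K=4$, against the known curvature of the Berger-type metric on $S^3$. I would also cross-check on Example \ref{ex: frame_T^3}, which is flat.
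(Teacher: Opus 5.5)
Your plan matches the paper's proof: the frame is orthonormal, the Koszul formula gives the nine covariant derivatives, and the curvature components follow from $\mathcal{R}(U,V)W=\nabla_U\nabla_VW-\nabla_V\nabla_UW-\nabla_{[U,V]}W$ after eliminating $R(I)$ and $R(J)$ via Proposition~\ref{eq: prop: Jacobi}. Two of your sample entries are wrong, though, so do not use them in the curvature computation: your own Koszul formula gives $\nabla_{X_1}R=-\tfrac{K}{2}X_2$ (torsion-freeness, $\nabla_RX_1-\nabla_{X_1}R=KX_2$, confirms this, and the coefficient $1-\tfrac{K}{2}$ belongs to $\nabla_{X_2}R=(1-\tfrac{K}{2})X_1$), and it gives $\nabla_{X_2}X_2=+JX_1$; with the corrected table the computation reproduces every entry of the statement.
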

\begin{proof}
Using the structure equations in \eqref{eq: frame_def}, the Koszul formula and the torsion-free property of the Levi-Civita connection, we get that
\begin{equation}\label{eq:derivatives_frame}
\setlength{\tabcolsep}{2.5em} 
\renewcommand{\arraystretch}{1.2} 
\begin{array}{rclcrclcrcl}
\nabla_{R}R &=& 0, &&
\nabla_{R}X_1 &=& \tfrac{1}{2}K X_2, &&
\nabla_{R}X_2 &=& -\tfrac{1}{2}K X_1,\\[3pt]
\nabla_{X_1}R &=& -\tfrac{1}{2}K X_2, &&
\nabla_{X_1}X_1 &=& -I X_2, &&
\nabla_{X_1}X_2 &=& I X_1 + \tfrac{1}{2}K R,\\[3pt]
\nabla_{X_2}R &=& \big(1-\tfrac{1}{2}K\big) X_1, &&
\nabla_{X_2}X_1 &=& \big(\tfrac{1}{2}K - 1\big)R - J X_2, &&
\nabla_{X_2}X_2 &=& J X_1.
\end{array}
\end{equation}
With these computations at hand and using Proposition \ref{eq: prop: Jacobi} for further simplifications, we can compute the Riemannian curvature tensor to get the relations in \eqref{eq:curvature_components_onecolumn}.
\end{proof}

The following property is taken from the discussion in \cite[Section 3]{harris2008dynamically}. Since it shall be useful for this work, we present its proof here.

\begin{lemma}\label{lemma:ode}
    Let $(Y,\lambda)$ be a closed contact $3$-manifold and $\{X_1,X_2\}$ be a contact canonical frame for $\lambda$. Denote by $\eta = x X_1 + yX_2$. Then for the Reeb flow $\varphi_t$,
    $$d\varphi_t(\eta) = x(t) X_1 + y(t) X_2,$$
    where $x(t)$ and $y(t)$ solve the initial value problem 
    $$\dot{x}(t) = y(t),\quad \dot{y}(t) = -Kx(t),$$
    with $x(0) = x$ and $y(0) = y$.
    \begin{proof}
Writing $d\varphi_t(\eta) = x(t) X_1 + y(t) X_2$, we obtain
$$\eta=x(t)d\phi_{-t}(X_1)+y(t)d\phi_{-t}(X_2).$$
    Differentiating with respect to $t$ we obtain
    $$0 = \overset{.}{x}X_1 + x[R, X_1] + \overset{.}{y}X_2 + y[R, X_2].$$
    Using the equations defining the frame in \eqref{eq: frame_def} and regrouping, we have:
    $$0 = (\overset{.}{x}-y)X_1 + (\overset{.}{y}+ xK)X_2,$$
    which gives the equations
   $$\begin{aligned}
   \overset{.}{x}&=y,\\
\overset{.}{y}&=-Kx.
       \end{aligned}$$
    \end{proof}
\end{lemma}

\section{On contact canonical frames with constant $K$}\label{sec: classification_proofs}
\subsection{The case $K\equiv 0$}

In this section we give the proofs of Theorems \ref{thm: K=0} and \ref{thm: K=I=0}.

\begin{proof}[Proof of Theorem \ref{thm: K=0}]
    We begin with the proof of item \ref{it: I=J=0}. From $K\equiv 0$ and Proposition \ref{eq: prop: Jacobi}, we have that $R(J)=0$ and $R(I)=J$. The former implies that $J$ is constant along flow lines of the Reeb vector field. Assume there is a flow line in which this constant is non-zero. Then, restricted to a flow line of the Reeb vector field, $I$ is linear in the parameter $t$ of the flow line. So: 
    \begin{itemize}
        \item if the flow line is not periodic, then we have that the smooth function $I$ is unbounded in the compact manifold $Y$, which is a contradiction.
        \item if the flow line is periodic then we must have $J=0$ along the flow line and $I$ constant. 
    \end{itemize}  
So $J\equiv 0$, and $I$ is constant along flow lines.

We first claim that $X_1(I)=0$. To see this, consider the vector field $X_I$ defined as the unique vector field in $\ker \lambda$ such that $\iota_{X_I}d\lambda=dI$, restricted to $\ker \lambda$. Make $X_I=\alpha X_1+\beta X_2$ and $d\varphi_t(X_I)=\alpha(t) X_1+\beta(t) X_2$, for $\varphi_t$ the flow of the Reeb vector field $R$. It follows from Lemma \ref{lemma:ode} by plugging in $K=0$, that $\alpha(t),\beta(t)$ satisfies the system of ordinary differential equations
$$\overset{.}{\alpha}(t)=\beta(t),\quad \overset{.}{\beta}(t)=0,$$
which means that $\alpha(t)=C_1t+C_2$ and $\beta(t)=C_1$, for real constants $C_1$ and $C_2$. Notice that by changing the initial conditions of this differential equation, we can think of $\alpha$ as a continuous global function on $Y$, that, as we just concluded, is linear along flow lines of $R$. Therefore, along a non-closed Reeb orbit, $\alpha$ is unbounded, contradicting the compactness of $Y$, so $C_1=0$. Whereas, along a closed Reeb orbit, $\alpha$ clearly cannot have $C_1\neq 0$. So, $\beta$ is globally equal to zero and hence by the definition of $X_I$, we have that $X_1(I)=0$ as claimed. 

Now, since by hypothesis we have that $[R,X_1]=0$ and both $R$ and $X_1$ are non-vanishing, it follows from Frobenius integrability Theorem, that the sub-bundle $\langle R,X_1\rangle\subset TY$ gives rise to a regular foliation of $Y$ by surfaces. Furthermore, since $X_1(I)=R(I)=0$, we must have that any leaf of this foliation is contained in a level set of $I$ and, any connected component of a level set of $I$ must be contained on a leaf of the foliation. Therefore, connected components of a level set are regular leaves of the foliation. Since level sets are closed submanifolds of the compact manifold $Y$, their connected components must be compact.  Any such connected component is a leaf of the foliation and in any such leaf there is a $\mathbb{R}^2$-action induced by the flows of $R$ and $X_1$. This action must have discrete isotropy group (since the leaf is a surface) and this subgroup must be necessarily isomorphic to $\mathbb{Z}^2$. Hence, the leaf is topologically a $T^2$, and the $\mathbb{R}^2$-action induces a free and proper $T^2$-action on the leaf. This adds up to a global proper and free $T^2$-action on $Y$. The quotient of $Y$ by this $T^2$-action must be a compact 1-dimensional manifold, hence topologically $S^1$, proving \ref{it: T^2-bundle}. 

Notice now that $I$ induces a smooth function $\tilde{I}$ on $S^1$. We now prove \ref{it: induced_I}. Notice that the linearization of the fibration map $Y\to S^1$ sends $X_2$ to a non-zero vector on $TS^1$. Since we intend to compute the monodromy around the base of this fibration, we will sometimes ahead use interchangeably $X_2$ or its image by its linearization, as a slight abuse of notation. Consider the adjoint action $\textup{ad}_{X_2}:\langle R,X_1\rangle \to \langle R,X_1\rangle$ given by 
$$\textup{ad}_{X_2}(W)=[X_2,W], $$
for any $W\in \langle R,X_1\rangle$. This is a linear map, easily seen to be represented by the matrix
\begin{equation*}
    M(\theta)=\begin{pmatrix}
0 & -1\\[4pt]
1 & -\tilde{I}(\theta)
\end{pmatrix}
\end{equation*}
as consequence of \ref{eq: frame_def} for $K\equiv J\equiv 0$. If $\Phi(\theta)$ denotes the linear map on the fiber of the $T^2$-fibration, induced by flowing along $X_2$ from base parameter 0 to $\theta$, then $\Phi(\theta)$ satisfies the linear ODE
$$\frac{d}{d\theta}\Phi(\theta)=M(\theta)\Phi(\theta), \quad \Phi(0)=\textup{Id}.$$
The monodromy after one loop around the base is $\Phi:=\Phi(l)$, where $l$ is the base-period\footnote{As consequence of Theorem \ref{thm: K=0}, for any point $p$ in a level surface of $I$, its flow by $X_2$ will at some time in the future intersect the level surface again. We then measure the length of this flow line with the contact form $\lambda$, and this will be the value of $l$.}. This $\Phi$ is the path-ordered exponential matrix
$$\Phi= \textup{exp}\left(\mathcal{T}\int_0^lM(\theta)d\theta\right),$$
where $\mathcal{T}$ denotes the time-ordering operator.
Differentiating $\det\Phi(\theta)$ we also get the equation
$$\frac{d}{d\theta}\det\Phi(\theta)=\textup{tr }(M(\theta))\det\Phi(\theta),$$
and solving we get
$$\det\Phi=\textup{exp}\left(\int_0^l\textup{tr }M(\theta)d\theta\right)=\textup{exp}\left(-\int_0^l\tilde{I}(\theta)d\theta\right).$$
Since the monodromy matrix $\Phi$ must have $\textup{det }\Phi=1$, we must have that $\int_{S^1}\tilde{I}(\theta)d\theta=0$, as claimed.

\end{proof}

\begin{proof}[Proof of Theorem \ref{thm: K=I=0}]
  Notice that $K\equiv 0$ implies that $J\equiv 0$ thanks to \ref{it: I=J=0} in Theorem \ref{thm: K=0}, so the flatness statement in \ref{it: flat_metric} follows from Proposition \ref{prop: curvature_tensor}, while the completeness of the metric is just due to the fact that $Y$ is compact. We now prove \ref{it: cryst_group}, notice first that the relations in \eqref{eq: frame_def} now read
\begin{equation}\label{eq: frame_reduced}
    \left[ X_2,R \right]=X_1,\quad \left[X_1,X_2 \right]=R,\quad \left[R,X_1\right]=0.
\end{equation}
These are exactly the commutation relations of the Lie algebra $\frak{se}(2)$ of orientation-preserving Euclidean motions (rotations and translations) of the plane. This Lie algebra corresponds with case VII in the list of all three-dimensional real Lie algebras up to isomorphism given in \cite{Bianchi2001}. Thus, the vector fields $R,X_1,X_2$ give $Y$ the structure of a manifold equipped with a global frame whose Lie algebra is $\frak{se}(2)$. Lie's third theorem states that every finite-dimensional real Lie algebra $\frak{g}$ is the Lie algebra of some simply-connected Lie group $G$ and such group is unique up to isomorphism, see for example \cite{SerreLie}. The simply-connected Lie group integrating $\frak{se}(2)$ is the universal cover $\widetilde{SE}(2)$ of the Lie group $SE(2)$ of orientation-preserving Euclidean motions of the plane given as
\[
SE(2)
= \mathbb{R}^2 \rtimes S^1
= \{\,((x,y),t)\mid (x,y)\in\mathbb{R}^2,\ t\in S^1\,\}.
\]
Thus, $\widetilde{SE}(2)$ as a Lie group that  topologically looks like $\mathbb{R}^3$ and whose group operation is given by the semi-direct product $\mathbb{R}^2\rtimes \mathbb{R}$, defined as follows: 
$$(x_1,y_1,t_1)\cdot(x_2,y_2,t_2)=((x_1,y_1)+R_{t_1}(x_2,y_2),t_1+t_2),$$
where $R_{t}$ denotes the rotation in the plane by angle $t\in \mathbb{R}$.

The relations in \eqref{eq: frame_reduced}, then imply that, locally, $Y$ looks like the simply-connected Lie group $\widetilde{SE}(2)\cong\mathbb{R}^3$. Since, $Y$ is compact, the only way this can happen is if $Y\cong \widetilde{SE}(2)/\Gamma$, where $\Gamma\subset \widetilde{SE}(2)$ is a discrete subgroup acting freely and properly discontinuously, such that the quotient is compact. Such a 
$\Gamma$ will be a lattice in $\widetilde{SE}(2)$. It follows then from \ref{it: flat_metric}, that $Y$ is locally modeled with the geometry of the Euclidean space $\mathbb{E}^3$.

By item \ref{it: flat_metric} the Riemannian metric $\langle\cdot,\cdot\rangle$ on $Y$ defined in \eqref{eq: riem_metric} is complete and flat. Let $(\widetilde{SE}(2), \langle\langle\cdot,\cdot\rangle\rangle)$ denote the Riemannian universal cover of $(Y,\langle\cdot,\cdot\rangle)$ equipped with the lifted metric $ \langle\langle\cdot,\cdot\rangle\rangle$. Then $(\widetilde {SE}(2), \langle\langle\cdot,\cdot\rangle\rangle)$ is complete, simply connected and has vanishing Riemann curvature tensor.

It is a consequence of the Cartan-Ambrose-Hicks Theorem on spaces of constant sectional curvature, that any complete, simply connected, flat Riemannian $n$-manifold is isometric to Euclidean space $(\mathbb{R}^n,g)$, where $g$ is the usual Euclidean metric, see for example \cite{doCarmoRiem}. The deck transformation group $\textup{Deck}(\widetilde {SE}(2)/Y)\cong\pi_1(Y)$ acts on $(\widetilde {SE}(2), \langle\langle\cdot,\cdot\rangle\rangle)\cong(\mathbb{R}^3,g)$ by orientation preserving isometries. Thus, under the identification $\widetilde{SE}(2)\cong\mathbb R^3$ the group
\[
\Gamma := \textup{Deck}(\widetilde{SE}(2)/Y)
\]
is realized as a discrete, torsion-free subgroup of the full Euclidean isometry group $\textup{Isom}(\mathbb R^3)\cong\mathbb R^3\rtimes O(3)$ acting freely and cocompactly on $\mathbb R^3$. By definition such a subgroup is a \emph{torsion-free crystallographic group}, and in this case, a Bieberbach subgroup of $\textup{Isom}(\mathbb{R}^3)$, see \cite{Charlap}.

Therefore
$$
Y \cong \widetilde{SE}(2)/\Gamma  \cong \mathbb R^3/\Gamma,
$$
with $\Gamma$ a torsion-free crystallographic group, proving item \ref{it: cryst_group}.

We now prove \ref{it: quot_of_R^3}. Let \(T\cong\mathbb R^3\) denote the subgroup of translations of \(\textup{Isom}(\mathbb R^3)\). Set
\[
\Lambda := \Gamma \cap T.
\]
By the first and second Bieberbach Theorems, see \cite{Charlap}, the following hold for any crystallographic group:
\begin{itemize}
  \item \(\Lambda\) is a full rank lattice in \(T\), hence \(\Lambda\cong\mathbb Z^3\).
  \item The quotient \(\Gamma/\Lambda\) is finite (this quotient is the holonomy group).
\end{itemize}
In particular \(\Lambda\) is a subgroup of finite index in \(\Gamma\). Passing to quotients, the covering \(\mathbb R^3/\Lambda \to \mathbb R^3/\Gamma\) has deck group \(\Gamma/\Lambda\), which is finite. But \(\mathbb R^3/\Lambda\) is the standard 3-torus \(T^3\). Thus \(T^3\) is a finite (regular) cover of \(Y\):
\[
T^3 \cong \mathbb R^3/\Lambda \longrightarrow \mathbb R^3/\Gamma \cong Y,
\]
and item \ref{it: quot_of_R^3} follows.

Finally, using \ref{it: cryst_group} we have that \ref{it: all_possibilities}  follows from the theory of Bieberbach groups and the complete list of orientable flat 3-manifolds, using the restrictions that \ref{it: T^2-bundle} in Theorem \ref{thm: K=0} and \ref{it: quot_of_R^3} impose on $Y$, see \cite{Charlap} and also \cite{Hillman2002}[Chapter 8], for complete list of Bieberbach groups in dimension 3.

\end{proof}

\subsection{The case $K\equiv 1$}

We start this section by giving the proof of Theorem \ref{thm: K=1}.

\begin{proof}[Proof of Theorem \ref{thm: K=1}]

First, we note that, by the nature of the proof and the fact that $\Omega \subset \R^2_{\geq 0}$ is a smooth star-shaped region, we may assume without loss of generality that $\mathbb{X}_{\Omega}$ is a toric domain of the form
$$
\mathbb{X}_{\Omega}=\{(z_1,z_2)\in \mathbb{C}^2\mid 0 \leq \pi \vert z_1 \vert^2 \leq a, \ \pi |z_2|^2 \leq f(\pi |z_1|^2) \},
$$
where $f:[0,a]\rightarrow [0,b]$ is a smooth function such that $f(0)=b>0, f(a)=0, f'(a)\neq 0$ and $\lim_{t\to 0^+}f'(t)\neq \infty$. Let $Y=\partial \mathbb{X}_{\Omega}$ and consider the following map
$$
   \begin{array}{cccc}
    \Psi \ : & \! [0,a]\times T^2 & \! \longrightarrow
                   & \! Y \\
    & \! (t,\theta_1,\theta_2) & \! \longmapsto
                   & \! (\sqrt{t/\pi} e^{i\theta_1},\sqrt{\frac{f(t)}{\pi}} e^{i\theta_2})
   \end{array}
   $$
   The map $\Psi$ has the following properties:
   \begin{itemize}
       \item $\Psi$ restricted to $(0,a)\times T^2$ is an embedding.
       \item $Y\setminus \mathrm{Im}(\Psi|_{(0,a)\times T^2})$ is the union of two Reeb orbits, we will refer to these as \emph{exceptional Reeb orbits}.
       \item $\Psi^*\lambda_0|_{Y}=\frac{t}{2\pi}d\theta_1+\frac{f(t)}{2\pi} d\theta_2$
   \end{itemize}
The exceptional Reeb orbits, which we denote by $\gamma_-$ and $\gamma_+$, are given by
   $$
   \gamma_-(\theta)=(0,\sqrt{b/\pi} e^{i\theta})\quad \mathrm{and}\quad \gamma_+(\theta)=(\sqrt{a/\pi} e^{i\theta},0).
   $$
 In these coordinates the Reeb vector field is given by
   $$
   R=\frac{2\pi}{f(t)-tf'(t)}(-f'(t)\partial_{\theta_1}+\partial_{
   \theta_2
   })=u(t)(-f'(t)\partial_{\theta_1}+\partial_{
   \theta_2
   }),
   $$
where $u(t):=\frac{2\pi}{f(t)-tf'(t)}$. Now, write the frame as follows
\begin{align*}
X_1 &= A_1\partial_t + A_2\partial_{\theta_1} + A_3\partial_{\theta_2},\\
X_2 &= B_1\partial_t + B_2\partial_{\theta_1} + B_3\partial_{\theta_2},
\end{align*}
for functions $A_1,A_2,A_3,B_1,B_2,B_3$ on $\partial \mathbb{X}_{\Omega}$. Using that $[R,X_1]=X_2$ we obtain
   \begin{equation}\label{eq:EDP_system_1}
\begin{aligned}
B_1 &= -u(t)\bigl(f'(t)\partial_{\theta_1}A_1 - \partial_{\theta_2}A_1\bigr),\\
B_2 &= \partial_t(u(t)f'(t))A_1 - u(t)f'(t)\partial_{\theta_1}A_2 + u(t)\partial_{\theta_2}A_2,\\
B_3 &= -u(t)f'(t)\partial_{\theta_1}A_3 - u'(t)A_1 + u(t)\partial_{\theta_2}A_3.
\end{aligned}
\end{equation}
Now using that $[X_2,R]=X_1$, we obtain
\begin{equation}\label{eq:EDP_system_2}
\begin{aligned}
A_1 &= u(t)\bigl(f'(t)\,\partial_{\theta_1}B_1 - \partial_{\theta_2}B_1\bigr),\\
A_2 &= -B_1\bigl(u'(t)f'(t) + u(t)f''(t)\bigr)
      + u(t)\bigl(f'(t)\,\partial_{\theta_1}B_2 - \partial_{\theta_2}B_2\bigr),\\
A_3 &= B_1\,u'(t)
      + u(t)\bigl(f'(t)\,\partial_{\theta_1}B_3 - \partial_{\theta_2}B_3\bigr).
\end{aligned}
\end{equation}
Note that if $A_1$ is a constant solution of these equations, then $A_1$ must be zero. But if $A_1$ is zero, then both $X_1$ and $X_2$ will be tangent to the Morse-Bott tori. Since $X_1,X_2$ and $R$ are linearly independent, we must have that $A_1$ cannot be a constant function.

It is not hard to see, putting together equations \ref{eq:EDP_system_1} and \eqref{eq:EDP_system_2}, that the functions $A_1$ and $B_1$ satisfy the following PDE
$$
     \bigl(1+u(t)^2(f'(t)\partial_{\theta_1}-\partial_{\theta_2})^2 \bigr)P=0.
$$
    Write the Fourier expansion of $A_1$ and $B_1$
    $$
    A_1=\sum_{(k_1,k_2)\in \mathbb{Z}^2}a_{(k_1,k_2)}(t)e^{i(k_1\theta_1+k_2\theta_2)}.
    $$
    and
    $$
    B_1=\sum_{(k_1,k_2)\in \mathbb{Z}^2}b_{(k_1,k_2)}(t)e^{i(k_1\theta_1+k_2\theta_2)}.
    $$
    Let $t_0$ be any number in $(0,a)$, we can not have both $A_1$ and $B_1$ equal zero at $t_0$ as vector field in $\{t_0\}\times T^2$, since the Reeb vector field is tangent to this torus and $X_1,X_2$ and $R$ are linearly independent everywhere. Therefore, there exists some $(k_1,k_2)$ pairs of integer in $\mathbb{Z}^2\setminus \{(0,0)\}$, such that 
    $$
    a_{k_1,k_2}(t_0)\neq 0\quad \mathrm{or}\quad b_{k_1,k_2}(t_0)\neq 0.
    $$

    For a fixed $t_0$ suppose that there exists a pair of integers $(k_1,k_2) \neq (0,0)$ such that $a_{k_1,k_2}(t_0)\neq 0$. By continuity $a_{k_1,k_2}(t)\neq 0$, for all $t$ belonging to a neighborhood of $t_0$. Now we observe that
    $$
    (f'\partial_{\theta_1}-\partial_{\theta_2})e^{i(k_1\theta_1+k_2\theta_2)}=i(f'(t)k_1-k_2)e^{i(k_1\theta_1+k_2\theta_2)},
    $$
    thus
    $$
    (f'\partial_{\theta_1}-\partial_{\theta_2})^2e^{i(k_1\theta_1+k_2\theta_2)}=-(f'(t)k_1-k_2)^2e^{i(k_1\theta_1+k_2\theta_2)}.
    $$
    We derive the equality
    $$
    a_{(k_1,k_2)}(t)=u(t)^2a_{(k_1,k_2)}(t)(f'(t)k_1-k_2)^2.
    $$
    Since $a_{k_1,k_2}(t)$ is non zero in the neighborhood of $t_0$ we obtain
    $$
    (f'(t)k_1-k_2)^2=\frac{1}{u(t)^2}=\frac{(f(t)-tf'(t))^2}{4\pi^2},
    $$
    so that $f$ must satisfy
    $$
    f'(t)=\frac{2\pi k_2+f(t)}{2\pi k_1+t}.
    $$
    Differentiating again, we obtain
    $$
    f''(t)=\frac{f'(t)(2\pi k_1+t)-(2\pi k_2+f(t))}{(2\pi k_1+t)^2}=\frac{(2\pi k_2+f(t))-(2\pi k_2+f(t))}{(2\pi k_1+t)^2}=0.
    $$
    Therefore, $f$ is linear in a neighborhood of $t_0$. The same analysis applies when $b_{k_1,k_2}(t_0) \neq 0$, and we conclude the following: for every $t_0 \in (0, a)$, there exists a neighborhood of $t_0$ in which $f$ is linear. Hence, $f$ must be a linear function on the entire interval $(0, a)$ and thus $X_\Omega = E(a,b)$. The relation
    $$\pi(a+b) = \frac{ab}{2}$$
    follows from the formula \eqref{eq:Kellipsoid} of $K$ for the ellipsoid $(\partial E(a,b),\lambda_0)$.
\end{proof}

\subsection{The case $K\equiv -1$}

Finally, to close this section, we give the proof of Theorem \ref{thm: K=-1}.

\begin{proof}[Proof of Theorem \ref{thm: K=-1}]
    Suppose that $\{R,X_1,X_2\}$ is a contact canonical frame for $\lambda$ such that $K\equiv -1$. In this case, Proposition \ref{eq: prop: Jacobi} gives $R(I) = J$ and $R(J) = I$, where $R$ is the Reeb vector field for $(Y,\lambda)$. Therefore, the function $I$ is the solution for the ordinary differential equation 
    $$R^2(I) = R(R(I)) = I.$$
    This implies that along a Reeb flow line $t\mapsto \varphi^t(y)$, the function $I$ is given by $$I(t) = A\cosh{t} + B\sinh{t}.$$ 
    Since $Y$ is closed and $I \colon Y \to \R$ is smooth, $I$ is bounded and we must have $A=B=0$. Therefore, $I\equiv 0$. By the symmetry of the problem in $I$ and $J$, we conclude also that $J \equiv 0$.

    Thus, the contact canonical frame satisfies the bracket relations
    \begin{equation}\label{eq:relations}
        [X_2,R] = X_1, \quad [X_1,X_2] = R, \quad [R,X_1] = -X_2.
    \end{equation}
    These are exactly the relations satisfied by the Lie Algebra $\mathfrak{sl}(2) \cong \mathfrak{so}(2,1)$. The simply connected Lie group integrating the latter is the universal covering of $SL(2,\R)$ that we denote here by $\widetilde{SL}(2,\R)$. In this case, $Y$ has a $SL_2$ Thurston geometry, i.e., it is given by the quotient $\widetilde{SL}(2,\R)/\Gamma$, where $\Gamma$ is a discrete, torsion-free, cocompact subgroup acting freely on $\widetilde{SL}(2,\R)$.
    
    The relations \eqref{eq:relations} yield a topological fibration
    $$\R \hookrightarrow \widetilde{SL}(2,\R) \to \mathbb{H}^2,$$
    where the lift of $R$ direction spans the $\R$ fiber and the directions determined by the lifts of $X_1$ and $X_2$ generate the hyperbolic plane $\mathbb{H}^2$. In this case, $\Gamma$ acts on $\widetilde{SL}(2,\R)$ by isometries and preserving the fibers. Since $Y = \widetilde{SL}(2,\R)/\Gamma$ is compact, this yields a Seifert fibration with nonzero Euler class
    \begin{equation}\label{eq:bundle}
        S^1 \hookrightarrow Y \xrightarrow{\pi} \mathbb{H}^2/\Gamma \cong X,
    \end{equation}
    where $X$ is a two-dimensional orbifold such that $\chi_{orb}(X)<0$, and the fibers are generated by the Reeb direction; see e.g. \cite{scott1983geometries} or \cite{martelli2016introduction}.
    \end{proof}

\section{Applications}\label{sec: applications}

We now give proofs of the results obtained as applications of contact canonical frames stated in Section \ref{sec: applications1}.

\subsection{Proof of Theorem \ref{thm:sturmcompar}}
We recall Sturm’s result following \cite[Chapter 3]{u1978ordinary}.

\begin{theorem}[Sturm Comparison]\label{thm: sturm}
    Let $\phi_1(t)$ and $\phi_2(t)$ be nontrivial solutions of the equations
    \begin{align}
        \ddot{y}(t) + q_1(t)\, y(t) = 0, \\
        \ddot{y}(t) + q_2(t)\, y(t) = 0,
    \end{align}
    respectively, on an interval where $q_1(t) \ge q_2(t)$ and $q_1 \ne q_2$. Then, if $t_1 < t_2$ are zeros of $\phi_2$, i.e.\ $\phi_2(t_1) = \phi_2(t_2) = 0$, there exists a zero $z$ of $\phi_1$ in $(t_1, t_2)$.
\end{theorem}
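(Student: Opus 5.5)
The plan is the classical Wronskian argument by contradiction. Let $t_1<t_2$ be zeros of $\phi_2$. Replacing $t_2$ by the first zero of $\phi_2$ after $t_1$ only shrinks the interval, so we may assume $t_1,t_2$ are consecutive zeros. Zeros of a nontrivial solution of a linear second-order ODE are isolated, since $\phi_2(t_0)=\dot\phi_2(t_0)=0$ would force $\phi_2\equiv 0$ by uniqueness. Hence this reduction is legitimate, and $\phi_2$ has constant sign on $(t_1,t_2)$. Replacing $\phi_2$ by $-\phi_2$ if necessary, we assume $\phi_2>0$ on $(t_1,t_2)$. Then $\dot\phi_2(t_1)>0$ and $\dot\phi_2(t_2)<0$, the strict inequalities again coming from uniqueness.

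Suppose, for contradiction, that $\phi_1$ has no zero in $(t_1,t_2)$. After a sign change we may assume $\phi_1>0$ on $(t_1,t_2)$, hence $\phi_1(t_1)\ge 0$ and $\phi_1(t_2)\ge 0$ by continuity. Consider the Wronskian-type function
\[
W(t)=\dot\phi_2(t)\,\phi_1(t)-\phi_2(t)\,\dot\phi_1(t).
\]
Differentiating and using the two equations gives
\[
\dot W=\ddot\phi_2\phi_1-\phi_2\ddot\phi_1=(q_1-q_2)\,\phi_1\phi_2 .
\]
Integrating over $[t_1,t_2]$ yields
\[
W(t_2)-W(t_1)=\int_{t_1}^{t_2}(q_1-q_2)\phi_1\phi_2\,dt .
\]
The integrand is nonnegative and continuous. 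Since $q_1\ge q_2$ and $q_1\not\equiv q_2$ on the interval, there is an open subinterval where $q_1>q_2$, and there $\phi_1\phi_2>0$. Hence the integral is strictly positive.

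On the other hand, at the endpoints $\phi_2$ vanishes, so
\[
W(t_1)=\dot\phi_2(t_1)\phi_1(t_1)\ge 0, \qquad W(t_2)=\dot\phi_2(t_2)\phi_1(t_2)\le 0 .
\]
Therefore $W(t_2)-W(t_1)\le 0$, which contradicts the strict positivity above. So $\phi_1$ must vanish somewhere in $(t_1,t_2)$.

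The only delicate point is the meaning of the hypothesis $q_1\ne q_2$. It must be read as $q_1\not\equiv q_2$ on the relevant interval $[t_1,t_2]$, not merely somewhere on the larger domain. Otherwise one could take $\phi_1=\phi_2$ on an interval where $q_1=q_2$, and $\phi_1$ would have no interior zero. I will state this reading explicitly; after the reduction to consecutive zeros it is exactly what is needed to make the integral strictly positive. In the applications to Theorem \ref{thm:sturmcompar}, one of the two equations has constant coefficient ($\inf K\circ\varphi$ or $\sup K\circ\varphi$), and there the possibly degenerate equality case is dealt with separately.
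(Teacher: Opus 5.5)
The paper does not prove this statement. It quotes it from Chapter~3 of the cited ODE text and uses it as a black box in the proof of Theorem~\ref{thm:sturmcompar}. Your Wronskian argument is the classical textbook proof and is correct apart from one step. It also makes explicit that $q_1\neq q_2$ must mean $q_1\not\equiv q_2$ on $[t_1,t_2]$; the literal statement fails for $q_1=q_2=1$ on $[0,\pi]$ with $\phi_1=\phi_2=\sin t$. Your use of continuity of $q_1,q_2$ to get an open set where $q_1>q_2$ is harmless, since $K$ is smooth in the application.

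The step that needs repair is the reduction to consecutive zeros. Let $t_2'$ be the first zero of $\phi_2$ after $t_1$. The hypothesis $q_1\not\equiv q_2$ on $[t_1,t_2]$ does not give $q_1\not\equiv q_2$ on $[t_1,t_2']$. So the strict positivity of $\int(q_1-q_2)\phi_1\phi_2$ can fail on the shrunken interval. There are two easy fixes.

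\textbf{Fix 1.} $\phi_2$ has only finitely many zeros $t_1=s_0<s_1<\dots<s_m=t_2$ in $[t_1,t_2]$. The nonempty relatively open set where $q_1>q_2$ must meet some $(s_{j-1},s_j)$. Run your argument on that interval $[s_{j-1},s_j]$; it gives a zero of $\phi_1$ in $(s_{j-1},s_j)\subset(t_1,t_2)$.

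\textbf{Fix 2.} Keep $t_2'$, and suppose $t_2'<t_2$ and $\phi_1$ has no zero in $(t_1,t_2')$. Your computation using only $q_1\ge q_2$ gives $0\le W(t_2')-W(t_1)\le 0$. Combined with $W(t_2')\le 0\le W(t_1)$, this forces $W(t_1)=W(t_2')=0$. Since $\dot\phi_2(t_2')\neq 0$, we get $\phi_1(t_2')=0$ with $t_2'\in(t_1,t_2)$.

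The non-strict computation in Fix 2 is worth stating on its own: if $\phi_1$ has no zero strictly between consecutive zeros of $\phi_2$, then $\phi_1$ vanishes at both of them. This weak form is what actually covers the degenerate cases in Theorem~\ref{thm:sturmcompar}, where $K\circ\varphi$ may equal its infimum or supremum on a whole interval.
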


Now we are ready to prove Theorem \ref{thm:sturmcompar}.

\begin{proof}[Proof of Theorem \ref{thm:sturmcompar}]
    By hypothesis, we have $X_2(K) = 0$. Then, Proposition~\ref{eq: prop: Jacobi} yields
    \[
        R(I) = J \quad \text{and} \quad IK + R(J) = 0.
    \]
    In particular, along a Reeb flow line $\varphi(t)$, the smooth function $I(t) = I \circ \varphi(t)$ satisfies the ODE
    \begin{equation}\label{eq:sturmode}
        \ddot{I}(t) + K(t) I(t) = 0.
    \end{equation}
    This ODE is of the same type as those appearing in the Sturm comparison theorem. We apply the latter by comparing $K(t)$ with $\inf K \circ \varphi$ and $\sup K \circ \varphi$. For the remainder of the proof, denote
    \[
        m = \inf K \circ \varphi \quad \text{and} \quad  M = \sup K \circ \varphi.
    \]
Since $K(t) \ge m$ for all $t$ and $y(t) = \sin(\sqrt{m}t)$ is a solution of
    \[
        \ddot{y}(t) + m\, y(t) = 0,
    \]
    the Sturm comparison theorem yields that either $I(t) = \sin(\sqrt{m}t)$ or else $I$ has a zero between any two consecutive zeros of $y(t) = \sin(\sqrt{m}t)$. In particular, the set $Z = \{t \in \R ; \ I(t) = 0\}$ of zeros of $I$ has infinitely many elements. Since $I$ satisfies the ODE \eqref{eq:sturmode} and is not identically zero, $Z$ is discrete. Moreover, any two consecutive zeros $t_1,t_2 \in Z$ must satisfy
    \[
        |t_1 - t_2| < \frac{2\pi}{\sqrt{m}}.
    \]
    This proves the first part of the theorem.

    For the second part, note that $M \ge K(t)$ for all $t$, and $y(t) = \sin(\sqrt{M}t)$ solves
    \[
        \ddot{y}(t) + M\, y(t) = 0.
    \]
    In this case, the Sturm comparison theorem yields that either $I(t) = \sin(\sqrt{M}t)$ or else there exists a zero of $y(t) = \sin(\sqrt{M}t)$ between any two consecutive zeros of $I(t)$. If $I(t) = \sin(\sqrt{M}t)$ we have $2\pi/\sqrt{M}$-periodicity and then we are done.

    Suppose that $I(t_1) = I(t_2)= \cdots = I(t_n)$ with $t_1<t_2<\cdots<t_n$. Since $\dot{I}(t) = J(t)$ and $I(t)$ is not identically zero, the function $I$ must have two zeros in each interval $(t_j, t_{j+1})$. In particular, there exists a zero of $y(t) = \sin(\sqrt{M}t)$ in each interval $(t_j,t_{j+1})$. In this case, we have $(n-1)$ such intervals and thus $(n-1)$ zeros of $y(t)$ in $(t_1,t_n)$. The result follows from the fact that the distance between two consectuve zeros of $y$ is $\pi/\sqrt{M}$.

Finally, if $\gamma \colon \R/T\Z \to Y$ is a Reeb orbit, we have $I(t) = I(t+T) = \cdots = I(t+nT)$ for $I(t) = I\circ \gamma(t)$ and any natural number $n$. Therefore, using the previous paragraph
$$nT > \frac{(n-1)\pi}{\sqrt{M}},$$
holds for any $n \in \mathbb{N}$ and this implies the desired upper bound.
\end{proof}

\subsection{Estimating action via CZ index}\label{sec: CZ_index}
\subsubsection{Conley--Zehnder index}
In this section, we recall the definition of the Conley--Zehnder index of a Reeb orbit but just in the particular case where $(Y,\lambda)$ assume a contact canonical frame $\{R,X_1,X_2\}$.

Let $\varphi \colon \R \times Y \to Y$ denote the Reeb flow on $(Y, \lambda)$ and $\gamma \colon [0,T] \to Y$ be a Reeb trajectory. The linearized Reeb flow defines a symplectic linear map
$$d\varphi_t(\gamma(0)) \colon \xi_{\gamma(0)} \to \xi_{\gamma(t)}.$$
Using the global symplectic\footnote{i.e. $\psi$ preserves the symplectic structure: $\psi^*d\lambda = \omega_0$, for the standard symplectic form $\omega_0$ on $\R^2$.} trivialization
\begin{align*}
    \psi \colon \R^2 \times Y &\to \xi \nonumber \\ (a,b,p) &\mapsto bX_1(p) + aX_2(p)
\end{align*}
we obtain a symplectic matrices smooth path $\Phi(t), t \in [0,T]$, defined by
\begin{equation}\label{eq:symppath}
    \Phi(t) = \psi(\cdot,\cdot,\gamma(t))^{-1} \circ d\varphi_t(\gamma(0)) \circ \psi(\cdot,\cdot, \gamma(0)).
\end{equation}
Note that $\Phi$ starts at $\Phi(0) = I_{2\times 2}$, the $2 \times 2$ identity matrix. Assume that $\gamma$ is a (closed) Reeb orbit, i.e., $\gamma(0) = \gamma(T)$. The Conley--Zehnder index of $\gamma$ with respect to the trivialization $\psi$ is, by definition, the Maslov index of the path $\Phi$. We follow the definition given in \cite[Section 3]{hofer1998dynamics}.

The symplectic trivialization $\psi$ identifies the standard complex structure
$$J_0 = \begin{bmatrix}
    0 & -1 \\ 1 & 0
\end{bmatrix}$$
on $\R^2 \cong \C$ with the almost complex structure $\mathbb{J}$ on $\xi$, see \eqref{def: almostcpx}.

Set $A(t) = -J_0\dot{\Phi}(t)\Phi(t)^{-1}$. Then $A(t)$ is a smooth path of symmetric matrices and $\Phi$ solves the ODE
\begin{equation}\label{eq:odecz}
    \begin{cases}
        \dot{\Phi} = J_0A\Phi \\
        \Phi(0) = I,
    \end{cases}
\end{equation}
for $t\in [0,T]$. It follows from \eqref{eq:symppath} and Lemma \ref{lemma:ode} that given $(a,b) \in \R^2$, we have
\begin{equation}\label{eq:phi(t)}
    \Phi(t)(a,b) = (y(t),x(t)),
\end{equation}
where $x(0) = b$, $y(0) = a$, $\dot{x} = y$ and $\dot{y} = -Kx$. Moreover, we have
\begin{equation}\label{eq:dotphi}
    \dot{\Phi}(t)(a,b) = \frac{d}{dt}(\Phi(t)(a,b)) = (\dot{y}(t),\dot{x}(t)) = (-Kx(t),y(t)).
\end{equation}
Using \eqref{eq:phi(t)} and \eqref{eq:dotphi} it is simple to check that for
$$J_0A = \begin{bmatrix}
    0 & -K \\ 1 & 0
\end{bmatrix},$$
$\Phi$ solves the ODE \eqref{eq:odecz}.
Since $J_0^{-1} = -J_0$, we find
\begin{equation}\label{eq:matrixA}
    A = \begin{bmatrix}
    1 & 0 \\ 0 & K
\end{bmatrix}.
\end{equation}

Consider the \emph{asymptotic operator} $L_A$ defined by
$$L_A(v) = -J_0 \dot{v} - A(t)v$$
in the Hilbert space of $T$-periodic maps $H^1(\R/T\Z, \R^2)$. It follows that the spectrum $\sigma(L_A)$ consists of real eigenvalues with multiplicity at most $2$ and is unbounded. Let $v\neq 0$ be an eigenfunction of $L_A$ associated to an eigenvalue $\tau \in \R$. Then
$$\tau v(t) = -J_0\dot{v}(t) - A(t)v(t),$$
and $v(0) = v(T)$ hold. Because $v$ does not vanish, there exists a smooth angle function $\theta(t)$ such that $v(t) = \vert v(t) \vert e^{2\pi i\theta(t)}$ for $t \in [0,T]$. We define the winding number\footnote{We note that eigenfunctions of the same eigenspace have the same winding number.}
$$\Delta(\tau,A) = \theta(T) - \theta(0).$$
From \cite[Lemma 3.6]{hofer1995properties}, the eigenvalues in $\sigma(L_A)$ come in pairs: for each $k \in \Z$, there exists exactly two eigenvalues $\tau,\tau'$ such that $\Delta(\tau,A) = \Delta(\tau',A) = k$. Therefore, we label the eigenvalues using the integers in such a way that $\tau_k \leq \tau_j$ for $k \leq j$ and $\Delta(\tau_k,A) = \left \lfloor \frac{k}{2} \right \rfloor$. In this case, the Conley--Zehnder index $CZ(\gamma)$ can be defined by
$$CZ(\gamma) = \mu(\Phi) = \max\{k \mid \tau_k <0\} \in \Z.$$

\subsubsection{Action and CZ index}
\begin{proof}[Proof of Theorem \ref{thm:estimate}]
    Let $\gamma \colon \R/T\Z \to Y$ be a closed Reeb orbit on $(Y,\lambda)$. By the discussion above, we know that the symplectic matrices path $\Phi$ satisfies the linear equation $\dot{\Phi} = J_0A\Phi$ for $A$ given by $\eqref{eq:matrixA}$. Let $v$ be an eigenvector of the asymptotic operator $L_A$ with eigenvalue $\tau$. So we have
    $$\tau v(t) = L_A(v) = -J_0\dot{v}(t)-A(t)v(t),$$
   and $v(0) = v(T)$. For $w(t) = v(t)/\vert v(t) \vert$, we obtain
   \begin{equation}\label{dotw}
\dot{w} = J_0(A+\tau I)w - w\langle w,J_0(A+\tau I) w \rangle.
   \end{equation}
   Writing $w(t) = e^{2\pi i \theta(t)}$, we get $2\pi\dot{\theta}J_0w = \dot{w}$. Then, we have
   \begin{equation}\label{2pitheta}
   \begin{aligned}
       2\pi\dot{\theta} &= \langle 2\pi \dot{\theta}J_0w,J_0w\rangle = \langle \dot{w}, J_0w \rangle \\ &= \langle J_0(A+\tau I)w - w\langle w,J_0(A+\tau I) w \rangle, J_0w \rangle \\ &= \langle J_0(A+\tau I)w, J_0w\rangle  \\ &= \langle (A+\tau I)w, w\rangle,
       \end{aligned}
   \end{equation}
   where we use \eqref{dotw} and the fact that $J_0$ is an isometry for the Euclidean inner product. Now using $\eqref{eq:matrixA}$,
   $$\langle (A+\tau I)w, w\rangle = (1+\tau)w_1^2 + (K+\tau)w_2^2,$$
   for $w(t) = (w_1(t),w_2(t))$. From \eqref{2pitheta}, we get
   $$\Delta(\tau,A) = \theta(T) - \theta(0) = \int_0^T \dot{\theta} \ dt = \frac{1}{2\pi}\int_0^T(1+\tau)w_1^2 + (K+\tau)w_2^2 \ dt.$$
   In particular, if $K \circ \gamma \geq 1$, we have
   \begin{equation}\label{estimategeq}
       \left \lfloor\frac{k}{2} \right \rfloor = \Delta(\tau_k,A) \geq \frac{1}{2\pi}\int_0^T (1+\tau_k)(w_1^2+w_2^2) = \frac{T}{2\pi}(1+\tau_k).
   \end{equation}
   On the other hand, if $K \circ \gamma \leq 1$, we get
   \begin{equation}\label{estimateleq}
              \left \lfloor\frac{k}{2} \right \rfloor = \Delta(\tau_k,A) \leq \frac{1}{2\pi}\int_0^T (1+\tau_k)(w_1^2+w_2^2) = \frac{T}{2\pi}(1+\tau_k).
   \end{equation}
   The desired estimates now follow from \eqref{estimategeq}, \eqref{estimateleq}, the fact that $\mathcal{A}(\gamma) = \int_\gamma \lambda = T$ and the definition of Conley--Zehnder index:
   $$CZ(\gamma) = \max\{k \mid \ \tau_k<0\}.$$
\end{proof}

\bibliographystyle{alpha}
\bibliography{biblio}
\end{document}